\newcommand{\Real}{\mathbb{R}}
\newcommand{\Complex}{\mathbb{C}}
\newcommand{\todo}[1]{{\sffamily To do:}}
\newtheorem{theorem}{Theorem}
\newtheorem {corollary}{Corollary}
\newtheorem {lemma}{Lemma}
\newenvironment{proof}{{\flushleft \emph{Proof}:}}{\ding{110}}
\newenvironment{proof1}{{\flushleft \emph{Proof of Theorem \ref{main}}:}}
\newenvironment{proof2}{{\flushleft \emph{Proof of Corollary \ref{cor1}}:}}
\newenvironment{proof3}{{\flushleft \emph{Proof of Corollary \ref{cor2}}:}}
\title{On finiteness of the sum of negative eigenvalues of Schr\"odinger operators}
\author{Michael Demuth\footnotemark[1],\;
Guy Katriel{\footnotemark[1]\;\;\footnotemark[2]\;\;}}
\date{}
\begin{document}

\maketitle{}
\renewcommand{\thefootnote}{\fnsymbol{footnote}}
\footnotetext[1]{Institute of Mathematics, Technical University of
Clausthal, 38678 Clausthal-Zellerfeld, Germany.}
\footnotetext[2]{Partially supported by the Humboldt Foundation
(Germany).}

\begin{abstract}
We prove conditions on potentials $V$ which imply that the sum of
the negative eigenvalues of the Schr\"odinger operator $-\Delta+V$
is finite. We use a method for bounding eigenvalues based on
estimates of the Hilbert-Schmidt norm of semigroup differences and
on complex analysis.
\end{abstract}


\section{Introduction}
A basic theme in the theory of Schr\"odinger operators $H=-\Delta+V$
is to relate the properties of the potential $V$ to properties of
the set of eigenvalues of $H$. In this paper we prove conditions on
the potential which are sufficient in order that the sum of negative
eigenvalues of the Schr\"odinger operator be finite:
\begin{equation}\label{m}
\sum_{\lambda\in \sigma^{-}(H)} |\lambda|<\infty,
\end{equation}
where $\sigma^-(H)=\sigma(H)\cap(-\infty,0)$, the negative part of
the spectrum of $H$.

Our main result is the following
\begin{theorem}\label{main}
Assume $d\geq 4$. Let $V:\Real^d\rightarrow\Real$ be a Kato
potential, and assume that $V_-=\min(V,0)$ satisfies, for some
$c>0$,
\begin{equation}\label{cond0}
\int_{\Real^d} \int_{\Real^d}e^{-c|w-w'|^2}|V_-(w)||V_-(w')| dw dw'
<\infty.
\end{equation}
and also

\noindent (i) If $d=4$ then
\begin{equation}\label{cond2}
\iint_{|w-w'|<1}\log\Big(\frac{1}{|w-w'|}\Big)|V_-(w)||V_-(w')| dw'
dw <\infty.
\end{equation}
\noindent (ii) If $d\geq 5$ then
\begin{equation}\label{cond1}
\iint_{|w-w'|<1}\frac{|V_-(w)||V_-(w')|}{|w-w'|^{d-4}} dw dw'
<\infty.
\end{equation}
Then (\ref{m}) holds.
\end{theorem}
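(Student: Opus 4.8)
The plan is to estimate the negative eigenvalues of $H$ through their images under the semigroup map $\lambda\mapsto e^{-t\lambda}$, combining a Hilbert--Schmidt bound on a semigroup difference with a complex-analytic argument on zeros of a holomorphic function. First, a reduction: with $V=V_++V_-$ ($V_+\geq0$), the form inequality $-\Delta+V\geq-\Delta+V_-$ and min--max give $\lambda_k(-\Delta+V)\geq\lambda_k(-\Delta+V_-)$ for every $k$, so $\sum_{\sigma^-(-\Delta+V)}\abs{\lambda}\leq\sum_{\sigma^-(-\Delta+V_-)}\abs{\lambda}$; since $U:=\abs{V_-}$ is again Kato and satisfies the hypotheses, it suffices to treat $H=-\Delta-U$, $U\geq0$, with $H_0=-\Delta$. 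As $U$ is Kato, $H$ is bounded below and $\sigma_{\mathrm{ess}}(H)=[0,\infty)$. Fix $t>0$. Once $e^{-tH}-e^{-tH_0}$ is shown compact, $e^{-tH}$ has essential spectrum $(0,1]$, its eigenvalues outside being $\mu_j=e^{t\abs{\lambda_j}}>1$, one for each $\lambda_j\in\sigma^-(H)$, with $\log\mu_j=t\abs{\lambda_j}$; since $\log\mu\leq\mu-1\leq\mu\log\mu$ for $\mu\geq1$ and the $\mu_j$ are bounded, \eqref{m} is equivalent to $\sum_j(\mu_j-1)<\infty$.

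The technical heart is an estimate of $\norm{e^{-tH}-e^{-tH_0}}_{\mathcal S_2}$. By the Duhamel/Dyson expansion, $e^{-tH}-e^{-tH_0}=\sum_{n\geq1}D_t^{(n)}$ with $D_t^{(n)}=\int_{0<s_1<\cdots<s_n<t}e^{-(t-s_n)H_0}U e^{-(s_n-s_{n-1})H_0}U\cdots U e^{-s_1H_0}\,ds\geq0$, the first term satisfying, with $p_s$ the Gaussian kernel of $e^{-sH_0}$ and $\int p_a(x,\cdot)p_b(\cdot,y)=p_{a+b}(x,y)$,
\[
\norm{D_t^{(1)}}_{\mathcal S_2}^2=\int_0^t\!\!\int_0^t\iint_{\Real^d\times\Real^d}U(w)U(w')\,p_{2t-s-s'}(w,w')\,p_{s+s'}(w,w')\,dw\,dw'\,ds\,ds'.
\]
Carrying out the $(s,s')$-integration and splitting into $\abs{w-w'}<\sqrt t$ and $\abs{w-w'}\geq\sqrt t$: on the far region the Gaussian decay is controlled by \eqref{cond0} (for $t$ small relative to the $c$ there), while on the near region the time integral of $t^{1-d/2}e^{-\abs{w-w'}^2/4t}$ produces, up to constants and a $t$-dependent prefactor, the kernel $\abs{w-w'}^{-(d-4)}$ for $d\geq5$ and $\log(1/\abs{w-w'})$ for $d=4$ — precisely the integrands of \eqref{cond1} and \eqref{cond2}. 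The tail $\sum_{n\geq2}D_t^{(n)}$ is summed geometrically via the Kato bound $a(\tau):=\sup_x\int_0^\tau(e^{s\Delta}U)(x)\,ds\to0$ as $\tau\to0$. Hence $e^{-tH}-e^{-tH_0}\in\mathcal S_2$, with norm bounded in terms of \eqref{cond0}--\eqref{cond1} and with controlled behaviour as $t\to0$ and $t\to\infty$.

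Finally, the complex analysis. The regularized perturbation determinant $f_t(z)={\det}_2\bigl(I+(e^{-tH_0}-z)^{-1}(e^{-tH}-e^{-tH_0})\bigr)$ is holomorphic on $\Complex\setminus(0,1]$, equals $1$ at $\infty$, and vanishes exactly at the $\mu_j$ (with algebraic multiplicity). The bound $\abs{{\det}_2(I+A)}\leq\exp(\Gamma_2\norm{A}_{\mathcal S_2}^2)$ together with the previous step gives a growth estimate of the form $\abs{f_t(z)}\leq\exp\bigl(C\,\mathrm{dist}(z,(0,1])^{-2}\bigr)$ near the slit, with $C$ governed by the $\mathcal S_2$-norm. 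Feeding this into the quantitative Blaschke-type theorem for zeros of holomorphic functions of restricted growth on a slit domain (the complex-analytic lemma of the preceding section, in the spirit of Borichev--Golinskii--Kupin and Hansmann) yields summability of $\mathrm{dist}(\mu_j,(0,1])=\mu_j-1$, hence $\sum_j(\mu_j-1)<\infty$, which is \eqref{m}.

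The main obstacle is making the Hilbert--Schmidt estimate sharp enough — in particular in its dependence on $t$ — that the complex-analytic step returns the summability exponent $1$ rather than a larger one. The emergence of the ``$(-\Delta)^{-2}$''-kernel $\abs{w-w'}^{-(d-4)}$, and its $d=4$ logarithmic endpoint, from the time integration is exactly what forces the restriction $d\geq4$, and controlling the Dyson tail uniformly in $n$ and $t$ requires the Kato hypothesis throughout.
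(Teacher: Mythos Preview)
There is a genuine gap in the complex-analytic step. Your growth bound $|f_t(z)|\leq\exp\bigl(C\,\mathrm{dist}(z,(0,1])^{-2}\bigr)$ comes from the crude resolvent estimate $\|(e^{-tH_0}-z)^{-1}\|\leq\mathrm{dist}(z,(0,1])^{-1}$ together with $|{\det}_2(I+A)|\leq\exp(\Gamma_2\|A\|_{\mathcal S_2}^2)$. But with blow-up of order $\mathrm{dist}^{-2}$ in the exponent, a Borichev--Golinskii--Kupin or Hansmann theorem does \emph{not} return $\sum_j(\mu_j-1)<\infty$: one obtains at best $\sum_j(\mu_j-1)^{\gamma}<\infty$ for $\gamma>2$, i.e.\ eigenvalue moments of order larger than~$2$. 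This is exactly the outcome of the abstract approach in~\cite{dk}, and the paper is explicit (end of Section~\ref{mi}) that separating the resolvent from $D_t$ as in~(\ref{ob}) is too crude to reach $\gamma=1$. Your closing paragraph locates the difficulty in the $t$-dependence of $\|D_t\|_{HS}$, but the real obstruction is the $z$-dependence of the resolvent factor.

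The idea you are missing is that for Schr\"odinger operators one can bound the product $G(z)D_t$, with $G(z)=ze^{-tH_0}(I-ze^{-tH_0})^{-1}$, \emph{without} invoking the operator norm of $G(z)$. Since $G(z)$ is convolution by a function $g_z$, one has
\[
\|G(z)D_t\|_{HS}\leq\|g_z\|_{L^2}\Big[\int_{\Real^d}\Big(\int_{\Real^d}|D_t(u,y)|\,du\Big)^2dy\Big]^{1/2},
\]
and the decisive point (Lemma~\ref{abh}) is that $\|g_z\|_{L^2}$ stays \emph{uniformly bounded} on $|z|<1$ when $d\geq5$, and diverges only like $\sqrt{\log(1/(1-r\cos\theta))}$ when $d=4$. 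This yields a uniform (respectively, integrable logarithmic) bound on $\log|h(z)|$ along $|z|=r$, after which Jensen's identity (Theorem~\ref{identity}) gives~(\ref{m}) directly --- no slit-domain Blaschke machinery is needed. Your identification of the kernels $|w-w'|^{-(d-4)}$ and $\log(1/|w-w'|)$ from the time integration is correct, but it pertains to the finiteness of the factor $\int(\int|D_t|)^2$ (Lemmas~\ref{edt210} and~\ref{sc}); the restriction $d\geq4$ is equally forced by the $L^2$-bound on $g_z$, which your proposal never exploits.
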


From the above Theorem we derive the following $L^p$-conditions on
$V$ for (\ref{m}) to hold:
\begin{corollary}\label{cor1} Assume $d\geq 4$ and $V$ is Kato, and
$V_-\in L^p$, where $p\in [\frac{2d}{d+4},2]$. Then (\ref{m}) holds.
\end{corollary}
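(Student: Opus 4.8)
The plan is to deduce Corollary~\ref{cor1} from Theorem~\ref{main} by checking, for $f:=|V_-|$, the hypotheses of that theorem that are not already assumed: the Gaussian condition~(\ref{cond0}), together with~(\ref{cond2}) when $d=4$ and~(\ref{cond1}) when $d\ge5$ (the Kato hypothesis being given outright). By assumption $f\in L^p(\Real^d)$ with $\tfrac{2d}{d+4}\le p\le2$, and since $d\ge4$ we have $\tfrac{2d}{d+4}\ge1$, so $1\le p\le2$. Each of the three conditions has the shape
\begin{equation*}
\iint_{\Real^d\times\Real^d}K(w-w')\,f(w)f(w')\,dw\,dw'=\int_{\Real^d}f\,(K*f)
\end{equation*}
for a nonnegative kernel $K$, so H\"older's inequality bounds it by $\|f\|_p\,\|K*f\|_{p'}$ with $p'=\tfrac{p}{p-1}$, and Young's inequality for convolutions gives $\|K*f\|_{p'}\le\|K\|_r\|f\|_p$ with $\tfrac1r=2-\tfrac2p$; the hypothesis $1\le p\le2$ is exactly what places $r$ in $[1,\infty]$. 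Thus everything reduces to locating the relevant kernels in the appropriate $L^r$ spaces.

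Condition~(\ref{cond0}) is immediate: the Gaussian $e^{-c|x|^2}$ lies in $L^r(\Real^d)$ for every $r\in[1,\infty]$, so Young's inequality settles it for all $p\in[1,2]$ with no further restriction. For~(\ref{cond1}) with $d\ge5$ the kernel is $K(x)=|x|^{-(d-4)}\mathbf{1}_{\{|x|<1\}}$, which belongs to $L^r(\Real^d)$ precisely for $1\le r<\tfrac{d}{d-4}$; when $\tfrac{2d}{d+4}<p\le2$ the Young exponent $r=\tfrac{p}{2(p-1)}$ satisfies $r<\tfrac{d}{d-4}$, so Young's inequality applies directly. At the left endpoint $p=\tfrac{2d}{d+4}$---which is $>1$ precisely because $d\ge5$---Young just misses (the kernel sits in weak-$L^{d/(d-4)}$, not $L^{d/(d-4)}$), and there I would instead invoke the Hardy--Littlewood--Sobolev inequality with $\lambda=d-4\in(0,d)$: since $\tfrac1p+\tfrac1p+\tfrac{d-4}{d}=2$, it yields $\iint|w-w'|^{-(d-4)}f(w)f(w')\,dw\,dw'\le C\|f\|_p^2<\infty$, hence also for the truncated kernel. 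The case $d=4$, condition~(\ref{cond2}), is parallel: $\log(1/|x|)\mathbf{1}_{\{|x|<1\}}\in L^r(\Real^d)$ for every finite $r$, so Young's inequality covers all $p\in(1,2]$, and only the endpoint $p=1=\tfrac{2d}{d+4}$ resists.

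For that last endpoint ($d=4$, $p=1$) I would use that $V$, hence $f$, is a Kato potential: from $\log(1/t)\le Ct^{-2}$ on $(0,1)$ and the $d=4$ Kato bound $\sup_x\int_{|x-y|<1}|f(y)|\,|x-y|^{-2}\,dy<\infty$ one gets $\|K*f\|_\infty<\infty$ for $K(x)=\log(1/|x|)\mathbf{1}_{\{|x|<1\}}$, and pairing this with $f\in L^1$ closes the estimate. Once all three hypotheses of Theorem~\ref{main} are verified, its conclusion~(\ref{m}) follows. The main obstacle is entirely concentrated at the endpoint exponents: away from $p=\tfrac{2d}{d+4}$ plain Young's inequality has room to spare, but at $p=\tfrac{2d}{d+4}$ one is forced onto Hardy--Littlewood--Sobolev, and at $p=1$ in dimension four even that fails, so the Kato hypothesis of Theorem~\ref{main} must be used quantitatively rather than merely assumed.
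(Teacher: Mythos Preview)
Your proof is correct and follows the paper's route essentially verbatim: H\"older plus Young's inequality for the Gaussian and logarithmic/power kernels, with Hardy--Littlewood--Sobolev at the endpoint $p=\tfrac{2d}{d+4}$ for $d\ge5$. The one difference is your explicit treatment of the $d=4$, $p=1$ endpoint via the Kato bound---the paper's own proof of Corollary~\ref{cor1} only writes out $p\in(1,2]$ for $d=4$ and tacitly leaves the $p=1$ case aside (Corollary~\ref{cor2} refers it back here), so your extra paragraph actually closes a small gap in the paper's presentation.
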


\begin{corollary}\label{cor2} If $d\geq 4$, $V$ is Kato and $V_-\in L^1$, then (\ref{m}) holds.
\end{corollary}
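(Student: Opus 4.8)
The plan is to obtain Corollary~\ref{cor2} as an immediate consequence of Theorem~\ref{main}: under the stated hypotheses ($d\ge 4$, $V$ Kato, $V_-\in L^1(\Real^d)$) I would simply verify that (\ref{cond0}) and, according to the dimension, (\ref{cond2}) or (\ref{cond1}) all hold, and then invoke the theorem. Condition (\ref{cond0}) is free, since $e^{-c|w-w'|^2}\le 1$ gives
\begin{equation*}
\int_{\Real^d}\int_{\Real^d}e^{-c|w-w'|^2}|V_-(w)||V_-(w')|\,dw\,dw' \le \Big(\int_{\Real^d}|V_-(w)|\,dw\Big)^2=\norm{V_-}_{L^1}^2<\infty .
\end{equation*}

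The short-range conditions are where the Kato hypothesis enters. For $d\ge 3$, ``$V$ Kato'' means $\lim_{r\downarrow 0}\sup_{x\in\Real^d}\int_{|x-y|<r}|x-y|^{-(d-2)}|V(y)|\,dy=0$, and a routine covering argument upgrades this to finiteness at the unit scale,
\begin{equation*}
M:=\sup_{x\in\Real^d}\int_{|x-y|<1}\frac{|V(y)|}{|x-y|^{d-2}}\,dy<\infty ,
\end{equation*}
and since $|V_-|\le |V|$ the same $M$ bounds the corresponding integral of $|V_-|$. Now for $0<t<1$ one has $\log(1/t)\le 1/t-1<1/t\le 1/t^{2}$ and, when $d\ge 5$, $t^{-(d-4)}\le t^{-(d-2)}$ (because $t^{2}\le 1$). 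Hence, in case $d=4$ the kernel $\log(1/|w-w'|)$ in (\ref{cond2}) is dominated by $|w-w'|^{-(d-2)}$ on $\{|w-w'|<1\}$, and in case $d\ge 5$ the kernel $|w-w'|^{-(d-4)}$ in (\ref{cond1}) is likewise dominated by $|w-w'|^{-(d-2)}$ there; in both cases the relevant double integral is therefore at most, using Tonelli and then the definition of $M$,
\begin{equation*}
\iint_{|w-w'|<1}\frac{|V_-(w)||V_-(w')|}{|w-w'|^{d-2}}\,dw\,dw' =\int_{\Real^d}|V_-(w)|\Big(\int_{|w-w'|<1}\frac{|V_-(w')|}{|w-w'|^{d-2}}\,dw'\Big)dw \le M\,\norm{V_-}_{L^1}<\infty .
\end{equation*}
Thus (\ref{cond2}) (if $d=4$) resp.\ (\ref{cond1}) (if $d\ge 5$) holds, and Theorem~\ref{main} yields (\ref{m}).

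I do not expect a genuine obstacle here: the point is precisely that the singularities $\log(1/|w-w'|)$ and $|w-w'|^{-(d-4)}$ appearing in Theorem~\ref{main} are strictly milder than the $|w-w'|^{-(d-2)}$ singularity that the Kato class is designed to absorb, so the membership $V\in K_d$ together with the global decay $V_-\in L^1$ is exactly what is needed. The only step requiring a little care is the passage from the $r\downarrow 0$ smallness in the definition of $K_d$ to the finiteness of $M$ at the fixed scale $r=1$; alternatively, in the borderline dimension $d=4$ one can bypass the Kato computation altogether, since there $\tfrac{2d}{d+4}=1$, so $V_-\in L^1$ already falls in the range $p\in[\tfrac{2d}{d+4},2]$ covered by Corollary~\ref{cor1}.
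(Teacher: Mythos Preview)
Your proposal is correct and follows essentially the same route as the paper: verify the hypotheses of Theorem~\ref{main} by bounding the short-range kernels $\log(1/|w-w'|)$ and $|w-w'|^{-(d-4)}$ by the Kato kernel $|w-w'|^{-(d-2)}$ on $\{|w-w'|<1\}$, then use $V_-\in L^1$ for the outer integration. Two small remarks: the Kato hypothesis gives the $K(\Real^d)$ condition for $V_-$ directly (not for $|V|$, since only $V_+\in K^{loc}$ is assumed), so your detour through $|V_-|\le|V|$ is unnecessary; and the paper sidesteps your ``routine covering argument'' by simply splitting the inner integral at the Kato scale $\alpha<1$ and controlling the annulus $\alpha<|w-w'|<1$ via the trivial bound $|w-w'|^{-(d-4)}\le\alpha^{-(d-4)}$ together with $\|V_-\|_{L^1}$.
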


It is interesting to compare these results with those that can be
obtained from the Lieb-Thirring inequalities, which also give some
$L^p$-conditions implying (\ref{m}). The Lieb-Thirring inequalities
\cite{lieb,laptev}
\begin{equation}\label{lt}\sum_{\lambda\in
\sigma^{-}(H)} |\lambda|^{\gamma}\leq
C_{d,\gamma}\int_{\Real^d}|V_-(x)|^{\frac{d}{2}+\gamma}dx,\end{equation}
hold for $\gamma\geq \frac{1}{2}$ when $d=1$, for $\gamma>0$ when
$d=2$, and for $\gamma\geq 0$ when $d\geq 3$. Since finiteness of
the left-hand side of (\ref{lt}) for any $\gamma\leq 1$ implies
(\ref{m}), we get the following sufficient conditions for (\ref{m})
to hold:

\noindent (1) $d=1$, $V_-\in L^p$, where $p\in [1,\frac{3}{2}]$.

\noindent (2) $d=2$, $V_-\in L^p$, where $p\in (1,2]$.

\noindent (3) $d\geq 3$, $V_-\in L^p$, where $p\in
[\frac{d}{2},\frac{d}{2}+1]$.

Comparing with our Corollaries \ref{cor1}, \ref{cor2}, we see that
in the case $d=4$ the Lieb-Thirring inequalities give (\ref{m}) when
$p\in [2,3]$, while Corollary \ref{cor1} gives the range of values
$p\in [1,2]$, so together we have the range $p\in [1,3]$. In the
case $d\geq 5$ the ranges of values of $p$ for which (\ref{m}) holds
given by Corollary \ref{cor1} are {\it{disjoint}} from the range of
values given by the Lieb-Thirring inequalities. We also note that
the result Corollary \ref{cor2} does not follow from the
Lieb-Thirring results.

An immediate question is whether the results of Corollaries
\ref{cor1},\ref{cor2} hold in dimensions $1,2,3$, that is whether
the restriction $d\geq 4$ that we impose is an artifact of our
method of proof or a reflection of the actual situation. In fact we
can construct a counterexample showing that the result of Corollary
\ref{cor1} is {\it{not}} true when $d=1$. Considering a potential of
the form $V(x)=-(1+|x|)^{-\alpha}$, one has $V\in L^2(\Real)$ when
$\alpha>\frac{1}{2}$. A WKB approximation shows that, when
$\alpha\in (0,2)$, the $n$-th eigenvalue satisfies $\lambda_n \sim
n^{-\frac{2\alpha}{2-\alpha}},$ so that when $\alpha <\frac{2}{3} $
the sum of the eigenvalues diverges. Thus, for $\alpha \in
(\frac{1}{2},\frac{2}{3})$ we have that $V\in L^2(\Real)$, yet the
sum of eigenvalues diverges. On the other in the case of Corollary
\ref{cor2}, the Lieb-Thirring results show that it is also valid for
$d=1$. We do not know whether Corollaries \ref{cor1},\ref{cor2} are
valid in dimensions $d=2,3$.

The technique we use for the proof of Theorem \ref{main} is a
considerable refinement of ideas we introduced in \cite{dk}. There
we developed a method, based on the Jensen identity of complex
analysis, to bound the moments (sums of powers) of the negative
eigenvalues of a self-adjoint operator $B$ on a complex Hilbert
space ${\cal{H}}$, assuming that there is a self-adjoint operator
$A$ with $\sigma(A)\subset [0,\infty)$, such that the semigroup
difference $D_t=e^{-tB}-e^{-tA}$ is a trace class or Hilbert-Schmidt
operator. We obtained some general `abstract' results bounding the
moments of eigenvalues. Applied to Schr\"odinger operators, these
results implied that, under appropriate conditions on the potential,
the moment sum on the left-hand side of (\ref{lt}) is finite for
$\gamma>2$. Theorem \ref{main}, which corresponds to the case
$\gamma=1$, is proven using the same method, but with the difference
that by restricting ourselves to Schr\"odinger operators rather than
general selfadjoint operators, we are able to improve the estimates
in such a way that the stronger result is proven.

We note that from the proof of Theorem \ref{main} one can extract
explicit bounds for the sum of negative eigenvalues, in terms of
 of $V_-$ (Kato norms and the quantities given in
(\ref{cond0}),(\ref{cond2}),(\ref{cond1})). However these
expressions are rather cumbersome, so we have decided to concentrate
on the more `qualitative' aspect of the results.

In the following section we recall the method developed in
\cite{dk}. In Section \ref{proof} we apply the method to obtain the
proof of Theorem \ref{main}.

\section{The Jensen formula and eigenvalues}
\label{mi}

In this section we recall the technique developed in \cite{dk}.
Assume that $A,B$ are self-adjoint operators in a complex Hilbert
space ${\cal{H}}$, with $\sigma(A)\subset [0,\infty)$, $B$
semibounded from below, and that the difference of semigroups
$D_t=e^{-tB}-e^{-tA}$ is Hilbert-Schmidt, for some $t>0$. These
assumptions imply, by Weyl's Theorem, that
$\sigma_{ess}(B)=\sigma_{ess}(A)\subset[0,\infty)$ so the negative
part of the spectrum $\sigma^{-}(B)$ consists only of eigenvalues,
which can only accumulate at $0$.

We define the operator-valued function
\begin{equation}\label{df}F(z)=z[I-ze^{-tA}]^{-1}D_t,\end{equation}
on $\Omega=\{ z\in \Complex \;|\; |z|<1 \}$. Note that the
assumption $\sigma(A)\subset [0,\infty)$ implies that the inverse
$[I-ze^{-tA}]^{-1}$ is well-defined. We have the identity
$$[I-ze^{-A}]^{-1}[I-ze^{-B}]=I-F(z),$$
which implies, for $\lambda<0$,
\begin{equation}\label{fir}\lambda\in \sigma^{-}(B)
 \;\Leftrightarrow \; 1\in
\sigma(F(e^{\lambda})).\end{equation}

The assumption that $D_t$ is Hilbert-Schmidt implies that $F(z)$ is
Hilbert-Schmidt, and we can define the holomorphic function $h(z)$
in $|z|<1$ by
\begin{equation}\label{dh} h(z)=Det_2(I-F(z)),\end{equation}
where $Det_2$ denotes the regularized determinant defined for
Hilbert-Schmidt perturbations of the identity (see e.g.
\cite{simonb}).

 From (\ref{fir}) we have, for $\lambda<0$,
\begin{equation}\label{key}\lambda\in \sigma^{-}(B)\;\Leftrightarrow \;
h(e^{\lambda})=0,\end{equation} and moreover the multiplicity of
$\lambda$ as an eigenvalue of $B$ coincides with multiplicity of
$e^\lambda$ as a zero of $h$.

We now recall the Jensen identity from complex analysis (see e.g.
\cite{rudin}, p. 307).
\begin{lemma}
\label{jensen0} Let $\Omega$ be the open unit disk. Let $h:\Omega
\rightarrow \Complex$ be a holomorphic function, and assume
$h(0)=1$. Then, for $0\leq r<1$,
$$\frac{1}{2\pi}\int_0^{2\pi}\log(|h(r e^{i\theta})|)d\theta=\log\Big(\prod_{|z|\leq r,\; h(z)=0}
\frac{r}{|z|}\Big).$$
\end{lemma}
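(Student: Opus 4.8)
The plan is to reduce the general case to the special case in which $h$ has no zeros in the closed disk $\{|z|\le r\}$, by dividing out the zeros using Blaschke-type factors adapted to the circle of radius $r$. Suppose first that $h$ has no zeros on the circle $|z|=r$ (the boundary case will be treated at the end by a limiting argument), and let $a_1,\dots,a_N$ be the zeros of $h$ in $|z|<r$, listed with multiplicity. For each $k$ set
\[
b_k(z)=\frac{r^2-\bar a_k z}{r(z-a_k)},
\]
and define $g(z)=h(z)\prod_{k=1}^N b_k(z)$. The function $g$ is holomorphic and zero-free in a neighborhood of $\{|z|\le r\}$: the simple poles of the factors $b_k$ at $z=a_k$ exactly cancel the zeros of $h$, while $b_k$ itself is zero-free there since $|a_k|<r$ forces $r^2-\bar a_k z\ne 0$ for $|z|\le r$. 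The point of this choice of factors is the elementary identity $|b_k(z)|=1$ whenever $|z|=r$: writing $z\bar z=r^2$ gives $r^2-\bar a_k z=z(\bar z-\bar a_k)=z\,\overline{(z-a_k)}$, so $|r^2-\bar a_k z|=r\,|z-a_k|$.

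Next I would treat the zero-free function $g$. Since $g$ is holomorphic and nowhere zero on a neighborhood of $\{|z|\le r\}$, it has a holomorphic logarithm there, so $\log|g|=\operatorname{Re}\log g$ is harmonic on that neighborhood. The mean value property of harmonic functions then gives
\[
\frac{1}{2\pi}\int_0^{2\pi}\log|g(re^{i\theta})|\,d\theta=\log|g(0)|.
\]
Now I evaluate both sides in terms of $h$. On the circle $|z|=r$ we have $|g(re^{i\theta})|=|h(re^{i\theta})|$ because every factor $b_k$ has modulus $1$ there, so the left-hand side equals $\frac{1}{2\pi}\int_0^{2\pi}\log|h(re^{i\theta})|\,d\theta$. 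At the center, $g(0)=h(0)\prod_{k=1}^N b_k(0)=\prod_{k=1}^N\frac{-r}{a_k}$, using $h(0)=1$, so $|g(0)|=\prod_{k=1}^N\frac{r}{|a_k|}=\prod_{|z|\le r,\,h(z)=0}\frac{r}{|z|}$, which is exactly the product in the statement. Combining these proves the identity under the assumption that $h$ has no zero on $|z|=r$.

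Finally I would remove that assumption. Since the zeros of $h$ are isolated, the set of radii $\rho\in(0,1)$ for which $h$ has a zero with $|z|=\rho$ is countable, so one can choose $r_n\uparrow r$ with no zeros on $|z|=r_n$ and apply the case already proved for each $r_n$. Letting $n\to\infty$, the product side is continuous: zeros strictly inside $|z|<r$ contribute continuously, and a zero lying on $|z|=r$ contributes a factor $r/|z|\to 1$. For the integral side one uses that $\log|h|$ is integrable on circles even in the presence of zeros and that $\log|h(r_ne^{i\theta})|\to\log|h(re^{i\theta})|$ in $L^1(d\theta)$. This last limiting step — controlling the logarithmic singularities of $\log|h|$ near a boundary zero uniformly in $n$ — is the only genuinely delicate point; it is handled by factoring out the finitely many boundary zeros locally and noting that $\log|z-z_0|$ is uniformly integrable over the family of nearby circles.
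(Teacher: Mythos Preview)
Your proof is correct and is the standard Blaschke-factor argument for Jensen's formula. Note, however, that the paper does not actually prove this lemma: it is stated as a classical fact with a reference to Rudin (p.~307), so there is no ``paper's own proof'' to compare against. The argument you give is essentially the one found in that reference, so your proposal is entirely appropriate here.
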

A variation on a particular case of the results of \cite{dk} which
we will use here is:
\begin{theorem}\label{identity}
Let $A$,$B$ be self-adjoint in a complex Hilbert space ${\cal{H}}$,
with $\sigma(A)\subset[0,\infty)$. Assume that, for some $t>0$,
$D_t=e^{-tB}-e^{-tA}$ is Hilbert-Schmidt. Then, defining $h$ by
(\ref{dh}), we have
\begin{equation}\label{identity1}
\sum_{\lambda\in \sigma^{-}(B)} |\lambda|=
\frac{1}{t}\lim_{r\rightarrow
1-}\frac{1}{2\pi}\int_0^{2\pi}\log(|h(re^{i\theta})|)d\theta.
\end{equation}
\end{theorem}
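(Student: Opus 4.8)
The plan is to combine the Jensen identity (Lemma~\ref{jensen0}) with the spectral correspondence (\ref{key}) and then pass to the limit $r\to 1-$. First I would apply Lemma~\ref{jensen0} to the holomorphic function $h$ defined by (\ref{dh}); this is legitimate since $F(0)=0$ and hence $h(0)=Det_2(I)=1$. For $0\le r<1$ the lemma gives
\begin{equation}\label{jensenapplied}
\frac{1}{2\pi}\int_0^{2\pi}\log(|h(re^{i\theta})|)\,d\theta
=\sum_{|z|\le r,\;h(z)=0}\log\frac{r}{|z|}.
\end{equation}
By (\ref{key}) the zeros of $h$ in $\Omega$ are exactly the points $e^{\lambda}$ with $\lambda\in\sigma^-(B)$, counted with the correct multiplicity, and $|e^\lambda|=e^\lambda<1$. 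So the sum on the right of (\ref{jensenapplied}) runs over those $\lambda\in\sigma^-(B)$ with $e^\lambda\le r$, i.e. $\lambda\le\log r$, and each term equals $\log r-\log|e^\lambda|=\log r-\lambda$.

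Next I would let $r\to 1-$. As $r\uparrow 1$ we have $\log r\uparrow 0$, and for each fixed $\lambda<0$ the quantity $\log r-\lambda\to-\lambda=|\lambda|$, while the set of $\lambda$ contributing (those with $\lambda\le\log r$) increases up to all of $\sigma^-(B)$. Each summand $\log r-\lambda$ is nonnegative (since $\lambda\le\log r<0$) and increases to $|\lambda|$ as $r\uparrow 1$; hence by the monotone convergence theorem for series,
\begin{equation}\label{mctstep}
\lim_{r\to 1-}\sum_{\lambda\le\log r}(\log r-\lambda)=\sum_{\lambda\in\sigma^-(B)}|\lambda|,
\end{equation}
with the understanding that both sides may be $+\infty$. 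Combining (\ref{jensenapplied}) and (\ref{mctstep}) and dividing by $t$ (the eigenvalues of $B$ correspond to zeros of $h$ at $e^{\lambda}$, but the identity we want has the factor $1/t$ because in the application $h$ is built from $e^{-tB}$, so a zero of $h$ at $e^{\mu}$ corresponds to $\mu=t\lambda$ — more precisely one replaces $\lambda$ by $t\lambda$ throughout, so $\sum|t\lambda|$ appears on the left and one divides by $t$) yields (\ref{identity1}).

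Let me restate that last point carefully, since it is the one place where bookkeeping matters: $h(z)=Det_2(I-F(z))$ with $F$ as in (\ref{df}) involves $e^{-tA}$ and $D_t$, and by the chain of equivalences (\ref{fir})--(\ref{key}) one has $h(e^{\mu})=0 \iff \mu/t\in\sigma^-(B)$ — equivalently, writing $\mu=t\lambda$, the zeros of $h$ in $\Omega$ are the points $e^{t\lambda}$, $\lambda\in\sigma^-(B)$. Then in (\ref{jensenapplied}) the generic term is $\log(r/|e^{t\lambda}|)=\log r - t\lambda$, and the monotone limit argument gives $\lim_{r\to 1-}\frac{1}{2\pi}\int_0^{2\pi}\log|h(re^{i\theta})|\,d\theta=\sum_{\lambda\in\sigma^-(B)} t|\lambda|$; dividing by $t$ gives the claim.

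The main (and essentially the only) obstacle is making the passage to the limit rigorous, including the case where $\sum|\lambda|=\infty$: one must check that the zeros of $h$ inside any disk $|z|\le r<1$ form a finite (or at worst suitably summable) set so that (\ref{jensenapplied}) is an honest finite sum, which follows because $h$ is holomorphic and not identically zero (its zeros are isolated, and $\sigma^-(B)$ can accumulate only at $0$, i.e. the zeros $e^{t\lambda}$ accumulate only at $1$, which is on the boundary). One should also note that $h\not\equiv 0$, so Jensen's formula applies on every circle of radius $r<1$ — potential zeros of $h$ lying exactly on the circle $|z|=r$ are handled by choosing a sequence $r_n\uparrow 1$ avoiding the (countable) set of moduli of zeros, which suffices for the limit. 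No analytic estimates beyond these elementary facts are needed; the content is entirely in the identity (\ref{key}) and Jensen's formula.
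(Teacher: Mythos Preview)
Your proof is correct and takes the same route as the paper: Jensen's identity combined with the zero--eigenvalue correspondence (\ref{key}), followed by passage to the limit $r\to 1-$. You are in fact more careful than the paper on two points it glosses over---the monotone-convergence justification of the limit and the factor-of-$t$ bookkeeping (the zeros of $h$ sit at $e^{t\lambda}$ rather than $e^{\lambda}$, which is precisely what accounts for the $1/t$ in (\ref{identity1})).
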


\begin{proof} By Jensen's identity and (\ref{key}) we have
\begin{eqnarray*}
&&\lim_{r\rightarrow 1-}\frac{1}{2\pi}\int_0^{2\pi}\log(|h(r
e^{i\theta})|)d\theta=\log\Big(\prod_{|z|<1, h(z)=0}
\frac{1}{|z|}\Big)\\
&=&\log\Big(\prod_{\lambda\in \sigma^{-}(B)}
\frac{1}{e^{\lambda}}\Big)=\sum_{\lambda\in \sigma^{-}(B)}
|\lambda|.
\end{eqnarray*}
\end{proof}

Theorem \ref{identity} shows that one can bound the sum of the
negative eigenvalues by bounding the function $h$, and this is our
task now.

Let us first note that, by the general inequality
\begin{equation*}\label{it}|Det_2(I-T)|\leq
e^{\frac{1}{2}\|T\|_{HS}^2}\end{equation*} for Hilbert-Schmidt
operators $T$, we have
\begin{equation}\label{itt}\log(|h(z)|)\leq
\frac{1}{2}\|F(z)\|_{HS}^2,\end{equation} so that we can bound
$h(z)$ by bounding the Hilbert-Schmidt norm of $F(z)$. To do this,
one can - and this is what was done in \cite{dk} - use the
inequality
\begin{equation}\label{ob}\|F(z)\|_{HS}\leq
|z|\|[I-ze^{-tA}]^{-1}\| \|D_t\|_{HS},\end{equation} where the norm
$\|[I-ze^{-tA}]^{-1}\|$ is the regular operator norm, which can in
turn be bounded in terms of the inverse distance of the spectrum of
$I-ze^{-tA}$ to $0$, using the assumption that $\sigma(A)\subset
[0,\infty)$. In this way we obtain the general results of \cite{dk}.

The observation at the basis of this work is that, when the
operators $A$,$B$ are Schr\"odinger operators, the bound on
$\|F(z)\|_{HS}$ obtained by using (\ref{ob}) is not optimal, and one
can obtain better bounds in the Schr\"odinger case by {\it{not}}
separating the estimation into two parts as in (\ref{ob}). For
example the bounds we obtain show that when $d\geq 5$, the function
$h(z)$ is {\it{uniformly}} bounded in the unit disk $|z|<1$, whereas
the bound obtained by using (\ref{ob}) goes to $+\infty$ as
$z\rightarrow 1$. These improved bounds lead, through Theorem
\ref{identity}, to improved bounds on the sum of the negative
eigenvalues of $B$.

\section{Proofs}
\label{proof}

Recall that the potential $V:\Real^d\rightarrow \Real$ is said to
belong to the class $K(\Real^d)$ if
$$\lim_{t\rightarrow 0}\sup_{x\in\Real^d}\int_0^{t}(e^{\eta \Delta
}|V|)(x)d\eta=0.$$ We note that when $d\geq 3$, a necessary and
sufficient condition for $V\in K(\Real^d)$ is that
\begin{equation}\label{kato}\lim_{\alpha\rightarrow 0}\Big[\sup_{x\in\Real^d}
\int_{|y-x|\leq
\alpha}\frac{|V(y)|}{|y-x|^{d-2}}dy\Big]=0.\end{equation} We recall
also that when $d\geq 3$, a {\it{sufficient}} condition for $V\in
K(\Real^d)$ (see \cite{simon}) is that $V$ is uniformly-locally in
$L^p$ for some $p>\frac{d}{2}$ , that is
$$\sup_{x\in \Real^d}\int_{|y-x|\leq 1}|V(x)|^{p}dx<\infty.$$
$V$ is said to belong to class $K^{loc}(\Real^d)$ if $\chi_{Q}V\in
K(\Real^d)$ for any ball $Q \subset \Real^d$, where $\chi_{Q}$
denotes the characteristic function of $Q$. $V$ is said to be a Kato
potential if $V_-=\min(V,0)\in K(\Real^d)$ and $V_{+}=\max(V,0)\in
K^{loc}(\Real^d)$.

By the min-max principle, the eigenvalues of $-\Delta+V_-$ are
smaller than or equal to the corresponding eigenvalues of
$-\Delta+V$, and therefore we have
\begin{equation}\label{pni}\sum_{\lambda \in \sigma^{-}(-\Delta+V)}|\lambda|\leq
\sum_{\lambda \in \sigma^{-}(-\Delta+V_-)}|\lambda|,\end{equation}
so that to prove Theorem \ref{main} it suffices to show that the
right-hand side of (\ref{pni}) is finite. We shall therefore take
$A=H_0=-\Delta$, $B=H_0+V_-$, so that
$$D_t=e^{-t(H_0+V_-)}-e^{-tH_0}.$$

We recall some fundamental facts about Schr\"odinger semigroups (see
e.g. \cite{demuth,simon}), which will be needed below:

\begin{lemma}\label{sch} If $V_-\in K(\Real^d)$ then the Schr\"odinger
semigroup $e^{-t(H_0+V_-)}:L^2(\Real^d)\rightarrow L^2(\Real^d)$
($t\geq 0$) is well defined, and moreover we have, for all $t>0$,
$$\| e^{-t(H_0+V_-)}\|_{L^1,L^\infty}<\infty,$$
$$\sup_{s\in [0,t]}\| e^{-s(H_0+V_-)}\|_{L^\infty,L^\infty}<\infty.$$
\end{lemma}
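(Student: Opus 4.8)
Since this is a standard fact from the theory of Schr\"odinger semigroups, the plan is to reduce both assertions to the Feynman--Kac representation together with Khasminskii's lemma, citing \cite{simon,demuth} for the one genuinely technical point. First I would record the setup: because $V_-\in K(\Real^d)$ is infinitesimally form-bounded relative to $H_0$, the operator $H_0+V_-$ is well-defined and self-adjoint (bounded below), its semigroup is positivity preserving, and
\[
(e^{-t(H_0+V_-)}f)(x)=\mathbb{E}_x\Big[\exp\Big(-\int_0^t V_-(\omega_s)\,ds\Big)f(\omega_t)\Big],
\]
where $\mathbb{E}_x$ denotes expectation over Brownian paths $\omega$ started at $x$.

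Next I would invoke Khasminskii's lemma. With $g(t)=\sup_{x\in\Real^d}\int_0^t(e^{\eta\Delta}|V_-|)(x)\,d\eta=\sup_{x}\mathbb{E}_x\big[\int_0^t|V_-(\omega_s)|\,ds\big]$, the Kato hypothesis gives $g(t)\to0$ as $t\to0$; fixing $t_0$ with $g(t_0)<1$, Khasminskii's lemma yields $\sup_x\mathbb{E}_x\big[\exp(\int_0^{t_0}|V_-(\omega_s)|\,ds)\big]\leq(1-g(t_0))^{-1}=:M$, and a Markov-property iteration over $\lceil t/t_0\rceil$ subintervals of length $\leq t_0$ gives a bound of the form $M^{\lceil t/t_0\rceil}$, finite for each $t$ and bounded for $t$ in bounded intervals. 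The $L^\infty\to L^\infty$ estimate is then immediate, since $-V_-=|V_-|\geq0$ together with positivity preservation imply $\|e^{-s(H_0+V_-)}f\|_\infty\leq\|f\|_\infty\sup_x\mathbb{E}_x\big[\exp(\int_0^s|V_-(\omega_r)|\,dr)\big]$, whence $\sup_{s\in[0,t]}\|e^{-s(H_0+V_-)}\|_{L^\infty,L^\infty}<\infty$.

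For $\|e^{-t(H_0+V_-)}\|_{L^1,L^\infty}<\infty$ I would pass to the integral kernel $K_t(x,y)$ of $e^{-t(H_0+V_-)}$, whose boundedness is equivalent to this estimate, and use the Brownian-bridge form of Feynman--Kac,
\[
K_t(x,y)=(4\pi t)^{-d/2}e^{-|x-y|^2/(4t)}\;\mathbb{E}^{x,y}_{t}\Big[\exp\Big(-\int_0^t V_-(\omega_r)\,dr\Big)\Big],
\]
where $\mathbb{E}^{x,y}_t$ is the Brownian-bridge expectation from $x$ to $y$ in time $t$; bounding $\mathbb{E}^{x,y}_t[\,\cdot\,]$ uniformly in $x,y$ (the bridge analogue of Khasminskii's lemma for $V_-\in K(\Real^d)$) then gives $\|K_t\|_\infty\leq(4\pi t)^{-d/2}\sup_{x,y}\mathbb{E}^{x,y}_t[\,\cdot\,]<\infty$. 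I expect this uniform bound on the bridge exponential functional to be the only real obstacle — one must control the functional even when the endpoints $x,y$ are far apart and the bridge traverses regions where $V_-$ is singular — and since it is precisely what the Kato class is designed to handle, I would quote it from \cite{simon,demuth} rather than reprove it. As an alternative route avoiding bridges, one can use the semigroup property and self-adjointness to reduce the claim to $\|e^{-s(H_0+V_-)}\|_{L^2,L^\infty}<\infty$, and establish the latter via an iterated Duhamel (Dyson-series) expansion against the free semigroup, convergence being guaranteed by the Kato smallness of $V_-$.
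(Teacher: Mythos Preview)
Your proposal is a correct and well-organized sketch of how these standard facts are actually established. However, the paper does not give a proof of this lemma at all: it introduces the statement with ``We recall some fundamental facts about Schr\"odinger semigroups (see e.g.\ \cite{demuth,simon}), which will be needed below,'' and then states Lemma~\ref{sch} without proof, treating it purely as a citation. So there is nothing to compare your argument against in the paper itself---you have supplied substantially more than the authors did, namely the Feynman--Kac/Khasminskii mechanism that underlies the cited references. If anything, your write-up could serve as a useful expansion of what the paper leaves implicit; for the purposes of matching the paper, a one-line reference to \cite{simon,demuth} would have sufficed.
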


As explained in the previous section, our task is to bound the norm
$\|F(z)\|_{HS}$, where $F(z)$ is given by (\ref{df}).

We define the operator-valued function $G(z)$, $|z|<1$, by
$$G(z)=ze^{-tA}[I-ze^{-tA}]^{-1}.$$
It is easily checked that
$$[I-ze^{-tA}]^{-1}=I+G(z),$$
hence
$$F(z)=z[I+G(z)]D_t,$$
so that
\begin{eqnarray}\label{fo}
\|F(z)\|_{HS}\leq |z|[\|D_t\|_{HS}+\|G(z)D_t\|_{HS}]
\end{eqnarray}
We are going to bound the two terms on the right-hand side of
(\ref{fo}).

We divide the required estimates into several steps.

\subsection{Some estimates on $G(z)$}
\begin{lemma}
The operator $G(z)$ can be represented in the form
\begin{equation}\label{rep}G(z)f=g_z*f,\;\;\;\forall f\in L^2(\Real^d),\end{equation} where $g_z\in
L^\infty(\Real^d)\cap L^2(\Real^d)$.
\end{lemma}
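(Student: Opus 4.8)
The plan is to compute $G(z)$ explicitly using the spectral/functional calculus for $A=H_0=-\Delta$. Since $e^{-tH_0}$ acts as convolution with the Gaussian heat kernel $k_t(x) = (4\pi t)^{-d/2}e^{-|x|^2/(4t)}$, and since $G(z) = \sum_{n\geq 1} z^n e^{-ntH_0}$ (a Neumann series which converges in operator norm for $|z|<1$ because $\|e^{-tH_0}\|_{L^2,L^2}\leq 1$), the operator $G(z)$ is itself a convolution operator: $G(z)f = g_z * f$ with
$$g_z = \sum_{n=1}^{\infty} z^n k_{nt}.$$
The first step is to justify that this series defines a function $g_z$ in the appropriate spaces and that convolution with it indeed equals $G(z)$ on $L^2$. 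Convergence in $L^1(\Real^d)$ is immediate since $\|k_{nt}\|_{L^1}=1$, so $\sum |z|^n \|k_{nt}\|_{L^1} = |z|/(1-|z|)<\infty$; this already gives $g_z\in L^1$. For the $L^\infty$ bound, use $\|k_{nt}\|_{L^\infty} = (4\pi n t)^{-d/2}$, so $\sum_{n\geq1}|z|^n (4\pi n t)^{-d/2}\leq (4\pi t)^{-d/2}\sum_{n\geq 1}|z|^n<\infty$; hence $g_z\in L^\infty$. Membership in $L^2$ then follows by interpolation (or directly: $\|g_z\|_{L^2}^2 \leq \|g_z\|_{L^1}\|g_z\|_{L^\infty}<\infty$).

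It remains to identify the convolution operator $f\mapsto g_z*f$ with $G(z)$. On the Fourier side, $\widehat{e^{-tH_0}f}(\xi) = e^{-t|\xi|^2}\hat f(\xi)$, so $\widehat{G(z)f}(\xi) = \big(\sum_{n\geq1} z^n e^{-nt|\xi|^2}\big)\hat f(\xi) = \dfrac{ze^{-t|\xi|^2}}{1-ze^{-t|\xi|^2}}\,\hat f(\xi)$, and this multiplier is exactly $\hat g_z(\xi)$ since the Fourier transform of $k_{nt}$ is $e^{-nt|\xi|^2}$ and the series converges uniformly in $\xi$ for fixed $|z|<1$ (as $0<e^{-t|\xi|^2}\leq 1$). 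Thus $G(z)f = g_z*f$ as elements of $L^2$, which is the claimed representation (\ref{rep}).

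The main obstacle, such as it is, is purely a matter of bookkeeping with the interchange of summation, convolution, and Fourier transform: one must check that the Neumann series for $[I-ze^{-tA}]^{-1}$ converges in the operator norm (clear from $|z|<1$ and the contraction property of the heat semigroup on $L^2$), that the termwise sum of Gaussians converges in $L^1\cap L^\infty$ (done above via the explicit norms), and that convolution with this limit agrees with the operator-norm limit of the partial sums (which follows because convolution with an $L^1$ function is bounded on $L^2$ by Young's inequality, so the partial-sum convolution operators converge in $L^2$-operator norm to $g_z*\,\cdot\,$). No single step is deep; the care needed is only to keep the two topologies — norm convergence of operators and $L^1\cap L^\infty$ convergence of kernels — compatible.
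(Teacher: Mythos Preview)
Your proof is correct. The route differs slightly from the paper's: the paper writes $g_z$ directly as the inverse Fourier transform of the multiplier $z e^{-t|\xi|^2}[1-ze^{-t|\xi|^2}]^{-1}$ and simply observes that this multiplier lies in $L^1(\Real^d)\cap L^2(\Real^d)$, whence $g_z\in L^\infty(\Real^d)\cap L^2(\Real^d)$ by the mapping properties of $\mathfrak{F}^{-1}$. You instead expand $G(z)$ as the Neumann series $\sum_{n\ge 1} z^n e^{-ntH_0}$ and build $g_z$ termwise as a sum of Gaussians, reading off the $L^1$ and $L^\infty$ norms of each term explicitly. Your approach is a little longer but has the mild advantage of yielding $g_z\in L^1$ as well (not needed here), and of making all estimates completely explicit; the paper's argument is more economical. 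The two are equivalent once you pass to the Fourier side, which you do anyway in the identification step.
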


\begin{proof}
From the definition of $G(z)$ and the properties of the Fourier
transform we have, for $f\in L^2(\Real^d)$,
$$\mathfrak{F}(G(z)f)=ze^{-t
|\xi|^2}[1-ze^{-t|\xi|^2}]^{-1}\mathfrak{F}(f),$$ so that if define
$g_z:\Real^d\rightarrow \Complex$, for $|z|<1$, by
$$g_z=z\mathfrak{F}^{-1}(e^{-t |\xi|^2}[1-ze^{-t|\xi|^2}]^{-1})$$
we get (\ref{rep}). Since $e^{-t |\xi|^2}[1-ze^{-t |\xi|^2}]^{-1}\in
L^1(\Real^d)\cap L^2(\Real^d)$, we have $g_z\in
L^\infty(\Real^d)\cap L^2(\Real^d)$.
\end{proof}

We note that while $\|G(z)\|_{L^2,L^2}=\|g_z\|_{L^1}\rightarrow
\infty$ when $z\rightarrow 1$, we are going to show - and this is a
key technical point for obtaining Theorem \ref{main} - that when
$d\geq 5$ the norm $\|g_z\|_{L^2}$ is in fact bounded for $|z|<1$.

We will denote, for $|z|<1$,
\begin{equation}\label{dm}M(z)=\|g_z\|_{L^2}.\end{equation}

We will need the following elementary estimates:
\begin{lemma}\label{esl}
Assuming $p>0$, $a<1$, Let
\begin{equation}\label{defj}J_p(a)=\int_1^{\infty}\frac{(\log(s))^{p-1}}{(s-a)^2}ds.\end{equation}
Then:

\begin{enumerate}
  \item For $p=2$ we have
  $$J_2(a)=O\Big(\log\Big(\frac{1}{1-a}\Big)\Big),\;\;as\;\;a\rightarrow1-$$
  \item For $p>2$ we have
  $$J_p(a)=O(1),\;\;as\;\;a\rightarrow1-$$
\end{enumerate}
\end{lemma}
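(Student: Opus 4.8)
The plan is to analyze the integral $J_p(a)=\int_1^\infty \frac{(\log s)^{p-1}}{(s-a)^2}\,ds$ by splitting the range of integration into a region near the singularity-relevant endpoint $s=1$ and a tail region $s\geq 2$, and to track the dependence on $a$ as $a\to 1-$.

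First I would handle the tail $\int_2^\infty \frac{(\log s)^{p-1}}{(s-a)^2}\,ds$. Since $a<1$, on $s\geq 2$ we have $s-a\geq s-1\geq \frac{s}{2}$, so the integrand is bounded by $\frac{2^2(\log s)^{p-1}}{s^2}$, which is integrable on $[2,\infty)$ independently of $a$; hence the tail is $O(1)$ for all $p>0$. This disposes of the non-singular part once and for all.

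Next I would treat the main part $\int_1^2 \frac{(\log s)^{p-1}}{(s-a)^2}\,ds$. The substitution $s=1+u$, $u\in[0,1]$, turns this into $\int_0^1 \frac{(\log(1+u))^{p-1}}{(u+(1-a))^2}\,du$, and writing $\epsilon=1-a>0$ we must understand $\int_0^1 \frac{(\log(1+u))^{p-1}}{(u+\epsilon)^2}\,du$ as $\epsilon\to 0+$. Using $\log(1+u)\asymp u$ on $[0,1]$, this is comparable to $\int_0^1 \frac{u^{p-1}}{(u+\epsilon)^2}\,du$. For $p=2$ the numerator is $u$, and $\int_0^1 \frac{u}{(u+\epsilon)^2}\,du = \int_0^1 \frac{1}{u+\epsilon}\,du - \epsilon\int_0^1\frac{1}{(u+\epsilon)^2}\,du = \log\frac{1+\epsilon}{\epsilon} - \epsilon\big(\frac{1}{\epsilon}-\frac{1}{1+\epsilon}\big) = \log\frac{1+\epsilon}{\epsilon} - 1 + \frac{\epsilon}{1+\epsilon}$, which is $\log(1/\epsilon)+O(1) = O(\log(1/(1-a)))$, giving statement~1. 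For $p>2$ the numerator $u^{p-1}$ vanishes fast enough at $u=0$ that $\int_0^1 \frac{u^{p-1}}{(u+\epsilon)^2}\,du \leq \int_0^1 u^{p-3}\,du = \frac{1}{p-2}<\infty$ uniformly in $\epsilon>0$ (and one checks the small-$p$ regime $2<p<3$ separately, where $\int_0^1 u^{p-3}\,du$ still converges since $p-3>-1$); hence $J_p(a)=O(1)$, giving statement~2. To make the $\asymp$ rigorous I would just use the two-sided bound $\frac{u}{2}\leq \log(1+u)\leq u$ valid for $u\in[0,1]$ (or any convenient constants), which upgrades the comparisons to genuine inequalities.

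The only mild obstacle is making sure the treatment of the numerator near $u=0$ is uniform in $\epsilon$ in the case $p>2$, especially for $2<p<3$ where $u^{p-1}/(u+\epsilon)^2$ is not itself bounded but is dominated by the integrable function $u^{p-3}$; once that domination is in place the rest is elementary. Everything else is a direct computation, and no result beyond the elementary inequalities for $\log(1+u)$ is needed.
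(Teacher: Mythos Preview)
Your proof is correct and follows essentially the same route as the paper: split at $s=2$, bound the tail uniformly, and on $[1,2]$ replace $\log s$ by $s-1$ (your $u$) to reduce to an elementary integral. The only cosmetic difference is that for $p>2$ you dominate $\frac{u^{p-1}}{(u+\epsilon)^2}$ by $u^{p-3}$ via $(u+\epsilon)^2\ge u^2$, whereas the paper uses $(s-a)^{p-1}\ge (s-1)^{p-1}$ to obtain $\int_1^2 (s-a)^{p-3}\,ds$ and then evaluates explicitly; both give the same $O(1)$ bound.
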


\begin{proof}
We write
$$\int_1^{\infty}\frac{(\log(s))^{p-1}}{(s-a)^2}ds=\int_1^{2}\frac{(\log(s))^{p-1}}{(s-a)^2}ds
+\int_2^{\infty}\frac{(\log(s))^{p-1}}{(s-a)^2}ds.$$ The second
integral on the right-hand side is obviously finite and bounded
independently of $a\in (-\infty,1)$. We continue estimating the
first integral.

Assuming $p\geq 1$, and using the fact that $\log(s)\leq s-1$ for
$s\geq 1$, we have
\begin{eqnarray*}&&\int_1^{2}\frac{(\log(s))^{p-1}}{(s-a)^2}ds\leq
\int_1^{2}\frac{(s-1)^{p-1}}{(s-a)^2}ds=
\int_1^{2}\frac{(s-1)^{p-1}}{(s-a)^{3-p}(s-a)^{p-1}}ds \\&\leq&
\int_1^{2}\frac{(s-1)^{p-1}}{(s-a)^{3-p}(s-1)^{p-1}}ds=\int_1^{2}\frac{1}{(s-a)^{3-p}}ds\\
&=&\left\{
\begin{array}{cc}
                                              \log\Big(\frac{2-a}{1-a}\Big) & p=2\\
                                              \frac{1}{2-p}[(1-a)^{p-2}-(2-a)^{p-2}] & 1\leq p\neq 2 \\
                                              \end{array}\right.
                                              \;\;\;\;as
                                              \;\;a\rightarrow 1-.
\\&=&
\left\{
\begin{array}{cc}
                                              O\Big(\log\Big(\frac{1}{1-a}\Big)\Big) & p=2\\
                                              O(1) & p> 2\\
                                              \end{array}\right.
                                              \;\;\;\;as
                                              \;\;a\rightarrow 1-.
\end{eqnarray*}
\end{proof}

We now present our main estimate on $M(z)$.
\begin{lemma}\label{abh} Define $M(z)$ by (\ref{dm}).
\begin{enumerate}
\item If $d=4$ then for some $C>0$
$$M(re^{i\theta})\leq C\Big(\log\Big(\frac{1}{1-r\cos(\theta)}\Big)\Big)^{\frac{1}{2}}
\;\;\;\forall r\in [0,1),\;\theta\in [0,2\pi].$$

\item If $d\geq 5$ then $$\sup_{|z|<1}M(z)<\infty.$$
\end{enumerate}
\end{lemma}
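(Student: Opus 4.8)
The key object is $M(z)=\|g_z\|_{L^2}$, where by the previous lemma $g_z=z\,\mathfrak{F}^{-1}\!\big(e^{-t|\xi|^2}[1-ze^{-t|\xi|^2}]^{-1}\big)$. By Plancherel, $M(z)^2=|z|^2\int_{\Real^d}\frac{e^{-2t|\xi|^2}}{|1-ze^{-t|\xi|^2}|^2}\,d\xi$. The plan is to pass to radial coordinates, substitute $s=e^{t|\xi|^2}\in[1,\infty)$ (so $e^{-t|\xi|^2}=1/s$, $|\xi|^2=\frac1t\log s$), and reduce the integral to a one-dimensional integral of the type $J_p(a)$ from Lemma \ref{esl}. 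Concretely, writing $|\xi|^{d-1}d|\xi|\,d\omega$ for the volume element and $|\xi|^{d-1}=\big(\frac1t\log s\big)^{(d-1)/2}$, $d|\xi|=\frac{1}{2t|\xi|}\frac{ds}{s}$, one finds that $\int_{\Real^d}\frac{e^{-2t|\xi|^2}}{|1-ze^{-t|\xi|^2}|^2}d\xi$ is, up to a dimensional constant and powers of $t$, equal to $\int_1^\infty \frac{(\log s)^{(d-2)/2}}{s^{\,?}\,|1-z/s|^2}\,ds=\int_1^\infty\frac{(\log s)^{(d-2)/2}\,s^{\text{(power)}}}{|s-z|^2}\,ds$. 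Keeping careful track of the powers of $s$ (from $e^{-2t|\xi|^2}=s^{-2}$, from $|1-z/s|^2=|s-z|^2/s^2$, and from the factor $1/s$ in $ds$) leaves an integrand of the form $\frac{(\log s)^{(d-2)/2}}{s\,|s-z|^2}$; since $s\ge 1$ this is bounded above by $\frac{(\log s)^{(d-2)/2}}{|s-z|^2}$.

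Next I would bound $|s-z|^2$ from below. Writing $z=re^{i\theta}$, $|s-z|^2=s^2-2sr\cos\theta+r^2\ge (s-r\cos\theta)^2$ (this is $s^2-2sr\cos\theta+r^2\cos^2\theta$, which is $\le|s-z|^2$ since $r^2\ge r^2\cos^2\theta$), and also $|s-z|^2\ge (s-r)^2$ when $\cos\theta\le 0$ is not needed — the bound $|s-z|^2\ge(s-r\cos\theta)^2$ suffices for all $\theta$ as long as $r\cos\theta<1\le s$. Hence $M(re^{i\theta})^2\le C\int_1^\infty\frac{(\log s)^{(d-2)/2}}{(s-a)^2}\,ds=C\,J_{d/2}(a)$ with $a=r\cos\theta$, which is $<1$. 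Now apply Lemma \ref{esl} with $p=d/2$: when $d=4$, $p=2$ and $J_2(a)=O(\log\frac{1}{1-a})$, giving part (1); when $d\ge 5$, $p\ge 5/2>2$ and $J_p(a)=O(1)$ uniformly, giving $M(re^{i\theta})^2\le C$ and hence $\sup_{|z|<1}M(z)<\infty$, which is part (2).

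The main obstacle, and the only place that needs genuine care rather than routine bookkeeping, is the exact accounting of the powers of $s$ and $t$ in the change of variables, since an error there would change the exponent $p=d/2$ in $J_p(a)$ and destroy the borderline dichotomy at $d=4$. One should double-check that the surviving power of $s$ in the denominator is at least $1$ (so that the crude bound $s^{-(\text{power})}\le 1$ on $[1,\infty)$ is legitimate) and that no hidden factor of $|\xi|^{-1}$ from $d|\xi|=\frac{ds}{2t|\xi|s}$ has been mishandled — here one uses $|\xi|^{d-1}/|\xi|=|\xi|^{d-2}=(\frac1t\log s)^{(d-2)/2}$, which is exactly why the exponent on $\log s$ is $(d-2)/2=p-1$ with $p=d/2$. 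A secondary point is to confirm that the lower bound $|s-z|\ge|s-r\cos\theta|$ is the right one to feed into $J_p$: it is, because $a=r\cos\theta$ ranges in $(-1,1)$ and Lemma \ref{esl} is stated precisely for $a<1$, and because $\frac{1}{1-r\cos\theta}$ is exactly the quantity appearing in the statement of part (1).
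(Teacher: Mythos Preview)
Your proposal is correct and follows essentially the same route as the paper: Plancherel, radial coordinates, the substitution $s=e^{t\rho^2}$, the lower bound $|s-z|\ge s-r\cos\theta$ (which is exactly the paper's $|1-ze^{-t|\xi|^2}|\ge 1-\operatorname{Re}(z)e^{-t|\xi|^2}$ after multiplying by $s$, since $r\cos\theta=\operatorname{Re}(z)$), and reduction to $J_{d/2}(\operatorname{Re}(z))$ followed by Lemma~\ref{esl}. Your bookkeeping on the powers of $s$ is right, yielding precisely $\frac{(\log s)^{d/2-1}}{s\,|s-z|^2}$ as in the paper; the only cosmetic difference is that the paper first remarks that $M(z)$ is uniformly bounded away from $z=1$ and then restricts to $\operatorname{Re}(z)>0$, whereas you observe (correctly) that $|s-z|^2\ge (s-r\cos\theta)^2$ holds for all $|z|<1$, which feeds directly into $J_{d/2}(a)$ with $a=r\cos\theta\in(-1,1)$.
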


\begin{proof}
We have
\begin{eqnarray}\label{bg1}M(z)
&=&|z|\|\mathfrak{F}^{-1}(e^{-t |\xi|^2}[1-ze^{-t
|\xi|^2}]^{-1})\|_{L^2}\\&=& |z|\|e^{-t |\xi|^2}[1-ze^{-t
|\xi|^2}]^{-1}\|_{L^2}=|z|\Big[\int_{\Real^d} \frac{e^{-2t
|\xi|^2}}{|1-ze^{-t
|\xi|^2}|^2}d\xi\Big]^{\frac{1}{2}}.\nonumber\end{eqnarray} From
(\ref{bg1}) one sees that $M(z)$ is uniformly bounded in the
complement of any neighborhood of the point $z=1$ in the unit disk,
so that the issue is to study the behavior of $M(z)$ when
$z\rightarrow 1$.
 It is easy to verify that, for any $|z|<1$ with $Re(z)>0$,
$\xi\in \Real^d$,
$$|1-ze^{-|\xi|^2}|\geq 1- Re(z)e^{-|\xi|^2},$$
hence
\begin{eqnarray}\label{sb}&&(M(z))^2=|z|^2\int_{\Real^d}
\frac{e^{-2t |\xi|^2}}{|1-ze^{-t|\xi|^2}|^2}d\xi\leq
|z|^2\int_{\Real^d}
\frac{e^{-2t |\xi|^2}}{(1-Re(z)e^{-t|\xi|^2})^2}d\xi\nonumber\\
&=&\omega_d |z|^2\int_0^{\infty}\frac{\rho^{d-1}e^{-2t
\rho^2}}{(1-Re(z)e^{-t\rho^2})^2}d\rho
=\omega_d\frac{|z|^2}{2}\frac{1}{t^{\frac{d}{2}}}\int_1^{\infty}\frac{1}{s}\frac{(\log(s))^{\frac{d}{2}-1}}{(s-Re(z))^2}
ds\nonumber\\&\leq&\omega_d\frac{|z|^2}{2}\frac{1}{t^{\frac{d}{2}}}\int_1^{\infty}\frac{(\log(s))^{\frac{d}{2}-1}}{(s-Re(z))^2}
ds=\omega_d\frac{|z|^2}{2}\frac{1}{t^{\frac{d}{2}}}
J_{\frac{d}{2}}(Re(z)),
\end{eqnarray}
where $\omega_d$ is the $d-1$-dimensional measure of the unit sphere
in $\Real^d$, and $J_{p}$ is defined by (\ref{defj}). The result
follows from (\ref{sb}) and from the estimates in Lemma \ref{esl}.
\end{proof}

\subsection{Some estimates on $D_t$}

We recall the Duhamel formula \begin{equation}\label{du}D_t=\int_0^t
e^{-s(H_0+V_-)}V_-e^{-(t-s)H_0}ds.\end{equation} The integral kernel
corresponding to the operator $D_t$ is denoted by $D_t(x,y)$.

The condition
\begin{eqnarray}\label{ddd}\int_{\Real^d}\Big(\int_{\Real^d}
|D_t(x,y)|dx\Big)^2dy <\infty.
\end{eqnarray}
on the kernel of $D_t$ will be essential to us, and will be used in
Lemma \ref{hs2} and \ref{prod}. The next lemma gives a sufficient
condition for (\ref{ddd}) to hold.

\begin{lemma}\label{edt210}
Assuming $V_-\in K(\Real^d)$, $t>0$ and
\begin{eqnarray}\label{u2}
\int_0^t\int_{\Real^d}\int_0^t
\int_{\Real^d}|V_-(w)|e^{-(s+s')H_0}(w,w') |V_-(w')| dw' ds' dw ds
<\infty,\end{eqnarray}
 we have (\ref{ddd}).
\end{lemma}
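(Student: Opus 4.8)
The plan is to start from the Duhamel formula (\ref{du}), which expresses $D_t$ as an integral of products of semigroups, and to use it to write the kernel $D_t(x,y)$ explicitly. Using the fact that $e^{-s(H_0+V_-)}$ has a kernel $p^{V_-}_s(x,w)$ and $e^{-(t-s)H_0}$ has the Gaussian kernel $p^0_{t-s}(w,y)$, we obtain
$$D_t(x,y)=\int_0^t\int_{\Real^d} p^{V_-}_s(x,w)\, V_-(w)\, p^0_{t-s}(w,y)\, dw\, ds.$$
Since $V_-\leq 0$, we have $|D_t(x,y)|\leq \int_0^t\int_{\Real^d} p^{V_-}_s(x,w)\,|V_-(w)|\, p^0_{t-s}(w,y)\, dw\, ds$, using positivity of the semigroup kernels. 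The first step is thus to integrate this bound in $x$ over $\Real^d$: the $x$-integral of $p^{V_-}_s(x,w)$ is controlled by Lemma \ref{sch}, since $\int_{\Real^d} p^{V_-}_s(x,w)\,dx = (e^{-s(H_0+V_-)}\mathbf{1})(w)\leq \|e^{-s(H_0+V_-)}\|_{L^\infty,L^\infty}$, which is bounded uniformly for $s\in[0,t]$. This yields
$$\int_{\Real^d}|D_t(x,y)|\,dx \leq C_t \int_0^t\int_{\Real^d} |V_-(w)|\, p^0_{t-s}(w,y)\, dw\, ds,$$
where $C_t=\sup_{s\in[0,t]}\|e^{-s(H_0+V_-)}\|_{L^\infty,L^\infty}<\infty$.

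The second step is to square this and integrate in $y$. Writing the right-hand side as a product of two copies (one with integration variable $(s,w)$ and one with $(s',w')$), and using Tonelli's theorem to interchange the order of integration, we get
$$\int_{\Real^d}\Big(\int_{\Real^d}|D_t(x,y)|\,dx\Big)^2 dy \leq C_t^2 \int_0^t\int_0^t\int_{\Real^d}\int_{\Real^d} |V_-(w)||V_-(w')| \Big(\int_{\Real^d} p^0_{t-s}(w,y)\, p^0_{t-s'}(w',y)\, dy\Big) dw\, dw'\, ds\, ds'.$$
Now the inner $y$-integral is exactly the semigroup composition law: $\int_{\Real^d} p^0_{t-s}(w,y)\, p^0_{t-s'}(w',y)\, dy = p^0_{(t-s)+(t-s')}(w,w')$, because the Gaussian kernel is symmetric and $e^{-aH_0}e^{-bH_0}=e^{-(a+b)H_0}$. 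Setting $\sigma=t-s$, $\sigma'=t-s'$, this becomes $\int_0^t\int_0^t p^0_{\sigma+\sigma'}(w,w')\,d\sigma\,d\sigma'$, and after relabeling we recognize precisely the expression appearing in hypothesis (\ref{u2}) (up to the harmless relabeling of integration variables $s\leftrightarrow\sigma$). Hence the assumed finiteness of (\ref{u2}) gives the finiteness of (\ref{ddd}), with the constant $C_t^2$ out front.

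The only genuinely delicate point is the justification of the two interchanges of integration (Tonelli) and, slightly before that, the use of Lemma \ref{sch} in the form $\int_{\Real^d} p^{V_-}_s(x,w)\,dx \leq C_t$: one must make sure that $p^{V_-}_s(x,w)$ is a bona fide nonnegative measurable kernel and that $\int p^{V_-}_s(x,w)\,dx$ coincides with $(e^{-s(H_0+V_-)}\mathbf{1})(w)$ in the pointwise sense — this is standard Schr\"odinger semigroup theory under $V_-\in K(\Real^d)$ (see \cite{demuth,simon}), but it is worth stating. Everything else is a clean application of positivity of heat kernels and Fubini--Tonelli, so I do not expect any further obstacle; the chain of inequalities above is the whole proof.
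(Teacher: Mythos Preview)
Your proof is correct and follows essentially the same route as the paper: Duhamel formula for $D_t(x,y)$, integrate in $x$ using the uniform $L^\infty\to L^\infty$ bound on $e^{-s(H_0+V_-)}$ from Lemma~\ref{sch}, square and integrate in $y$, and use the semigroup property of the free heat kernel to collapse the $y$-integral into $e^{-(s+s')H_0}(w,w')$. The only cosmetic difference is that the paper performs the substitution $s\mapsto t-s$ immediately after the $x$-integration (so that $e^{-sH_0}$ appears rather than $e^{-(t-s)H_0}$), whereas you postpone this relabeling until after squaring; the two are equivalent.
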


\begin{proof}
By the Duhamel formula (\ref{du}) we have
\begin{eqnarray*}&&D_t(x,y)=\int_0^t \int_{\Real^d}e^{-s(H_0+V_-)}(x,w)V_-(w)e^{-(t-s)H_0}(w,y)dw ds
\end{eqnarray*}
so
\begin{eqnarray*}&&\int_{\Real^d}|D_t(x,y)|dx\\&\leq&
\int_0^t\int_{\Real^d}
\Big(\int_{\Real^d}e^{-s(H_0+V_-)}(x,w)dx\Big)|V_-(w)|e^{-(t-s)H_0}(w,y)dw
ds \nonumber\\&\leq& \Big[\sup_{s\in
[0,t],\;w\in\Real^d}\int_{\Real^d}e^{-s(H_0+V_-)}(x,w)dx\Big]\int_0^t\int_{\Real^d}
|V_-(w)|e^{-sH_0}(w,y)dw ds,\\&=& \sup_{s\in [0,t]}
\|e^{-s(H_0+V_-)}\|_{L^\infty,L^\infty}\int_0^t\int_{\Real^d}
|V_-(w)|e^{-sH_0}(w,y)dw ds,\nonumber \nonumber \end{eqnarray*}
which implies
\begin{eqnarray*}
&&\Big(\int_{\Real^d}|D_t(x,y)|dx \Big)^2\leq \sup_{s\in [0,t]}
\|e^{-s(H_0+V_-)}\|_{L^\infty,L^\infty}^2\\&\times&\int_0^t\int_{\Real^d}\int_0^t
\int_{\Real^d}|V_-(w)|e^{-sH_0}(w,y) |V_-(w')|e^{-s'H_0}(w',y) dw'
ds' dw ds,
\end{eqnarray*}
hence
\begin{eqnarray}\label{u1}&&\int_{\Real^d}\Big(\int_{\Real^d}|D_t(x,y)|dx
\Big)^2dy\leq \sup_{s\in [0,t]}
\|e^{-s(H_0+V_-)}\|_{L^\infty,L^\infty}^2\\
&\times&\int_0^t\int_{\Real^d}\int_0^t
\int_{\Real^d}|V_-(w)|e^{-(s+s')H_0}(w,w') |V_-(w')| dw' ds' dw
ds.\nonumber
\end{eqnarray}
and the finiteness of the right-hand side of (\ref{u1}) follows from
 Lemma \ref{sch} and from the assumption (\ref{u2}).
\end{proof}

\begin{lemma}\label{hs2} If $V\in K(\Real^d)$  and (\ref{ddd}) holds for $t>0$
sufficiently small, then $D_t$ is Hilbert-Schmidt for $t>0$
sufficiently small.
\end{lemma}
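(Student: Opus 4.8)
The plan is to upgrade the integrability hypothesis (\ref{ddd}), which controls $D_t$ in an $L^2_y(L^1_x)$ fashion, to the $L^2_{x,y}$ control required for the Hilbert--Schmidt property, by "smoothing in the $x$ variable" with half of each semigroup. With $A=H_0$, $B=H_0+V_-$, the semigroup property $e^{-tB}=e^{-(t/2)B}e^{-(t/2)B}$, $e^{-tA}=e^{-(t/2)A}e^{-(t/2)A}$ yields the algebraic identity
$$D_t=e^{-(t/2)B}D_{t/2}+D_{t/2}e^{-(t/2)A}.$$
Since (\ref{ddd}) is assumed for all sufficiently small $t$, it is in particular available at $t/2$, so both occurrences of $D_{t/2}$ carry the hypothesis we need, and it suffices to show each of the two summands is Hilbert--Schmidt.

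The tool for this is the elementary estimate: if $S$ has kernel $S(x,z)$ and $T$ has kernel $T(z,y)$, then, since for fixed $y$ the function $x\mapsto (ST)(x,y)$ is precisely $S$ applied to the $L^1$-function $z\mapsto T(z,y)$,
$$\|ST\|_{HS}^2=\int\!\!\int|(ST)(x,y)|^2\,dx\,dy\leq \|S\|_{L^1,L^2}^2\int\Big(\int|T(z,y)|\,dz\Big)^2\,dy.$$
Applied with $S=e^{-(t/2)B}$, $T=D_{t/2}$ this bounds the first summand by $\|e^{-(t/2)B}\|_{L^1,L^2}^2$ times the finite quantity (\ref{ddd}) at time $t/2$. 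For the second summand, since $e^{-(t/2)B}$ and $e^{-(t/2)A}$ are self-adjoint we have $D_{t/2}^*=D_{t/2}$, hence $(D_{t/2}e^{-(t/2)A})^*=e^{-(t/2)A}D_{t/2}$; as the Hilbert--Schmidt norm is adjoint-invariant, the same estimate with $S=e^{-(t/2)A}=e^{-(t/2)H_0}$, $T=D_{t/2}$ applies. Thus everything reduces to the boundedness of $e^{-(t/2)H_0}$ and $e^{-(t/2)B}$ from $L^1$ to $L^2$.

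The bound $\|e^{-(t/2)H_0}\|_{L^1,L^2}<\infty$ is immediate from the explicit Gaussian kernel $K(x,y)=(2\pi t)^{-d/2}e^{-|x-y|^2/(2t)}$ together with Young's inequality $\|K*f\|_{L^2}\leq\|K\|_{L^2}\|f\|_{L^1}$ and $K\in L^2(\Real^d)$. For the perturbed semigroup I would argue from Lemma \ref{sch}: $e^{-(t/2)B}$ maps $L^1\to L^\infty$ with finite norm $C$, its kernel is nonnegative (as $V_-\leq 0$) and symmetric, so $\int|e^{-(t/2)B}(x,y)|\,dx=(e^{-(t/2)B}\mathbf{1})(y)\leq\|e^{-(t/2)B}\|_{L^\infty,L^\infty}<\infty$, whence $\int|e^{-(t/2)B}(x,y)|^2\,dx\leq C\,\|e^{-(t/2)B}\|_{L^\infty,L^\infty}$ uniformly in $y$, i.e. $e^{-(t/2)B}:L^1\to L^2$ boundedly. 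Combining the three displays gives $\|D_t\|_{HS}<\infty$. I expect the only point requiring genuine care — the main (mild) obstacle — is this last step, extracting the $L^1\to L^2$ bound for $e^{-(t/2)B}$ from the $L^1\to L^\infty$ and $L^\infty\to L^\infty$ information in Lemma \ref{sch} and from positivity/symmetry of the Schr\"odinger kernel; the abstract Hilbert--Schmidt estimate and the free-semigroup bound are routine.
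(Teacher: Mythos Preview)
Your proof is correct and follows essentially the same route as the paper: the same splitting $D_t=e^{-(t/2)B}D_{t/2}+D_{t/2}e^{-(t/2)A}$, the same use of adjoint-invariance for the second summand, and the same reduction of the Hilbert--Schmidt norm to the quantity in (\ref{ddd}) at time $t/2$ via the $L^1\to L^\infty$ and $L^\infty\to L^\infty$ bounds of Lemma~\ref{sch}. The only cosmetic differences are that the paper (i) dominates the free-semigroup term by the perturbed one using the pointwise kernel inequality $e^{-(t/2)H_0}(x,u)\le e^{-(t/2)(H_0+V_-)}(x,u)$, rather than treating it separately, and (ii) collapses $\int e^{-(t/2)B}(x,u)e^{-(t/2)B}(x,u')\,dx=e^{-tB}(u,u')$ via the semigroup property and then bounds by $\|e^{-tB}\|_{L^1,L^\infty}$, instead of packaging this as an $L^1\to L^2$ operator-norm bound as you do.
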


\begin{proof}
By the identity
$$D_t=e^{-\frac{t}{2}(H_0+V_-)}D_{\frac{t}{2}}+D_{\frac{t}{2}}e^{-\frac{t}{2}H_0},$$
we have
\begin{eqnarray}\label{ff1}
\|D_t\|_{HS}&\leq &\|e^{-\frac{t}{2}(H_0+V_-)}D_{\frac{t}{2}}\|_{HS}+\|D_{\frac{t}{2}}e^{-\frac{t}{2}H_0}\|_{HS}\nonumber\\
&=&\|e^{-\frac{t}{2}(H_0+V_-)}D_{\frac{t}{2}}\|_{HS}+\|e^{-\frac{t}{2}H_0}D_{\frac{t}{2}}\|_{HS}.
\end{eqnarray}
Since
\begin{eqnarray}\label{q1}\|e^{-\frac{t}{2}(H_0+V_-)}D_{\frac{t}{2}}\|_{HS}^2=
\int_{\Real^d}\int_{\Real^d}\Big(\int_{\Real^d}
 e^{-\frac{t}{2}(H_0+V_-)}(x,u)D_{\frac{t}{2}}(u,y)du\Big)^2 dx
 dy,\nonumber\end{eqnarray}
$$\|e^{-\frac{t}{2}H_0}D_{\frac{t}{2}}\|_{HS}^2=\int_{\Real^d}\int_{\Real^d}\Big(\int_{\Real^d}
 e^{-\frac{t}{2}H_0}(x,u)D_{\frac{t}{2}}(u,y)du\Big)^2 dx dy,$$
 and since
 $$e^{-\frac{t}{2}H_0}(x,u)\leq e^{-\frac{t}{2}(H_0+V_-)}(x,u),$$
 we have
\begin{equation}\label{ff2}\|e^{-\frac{t}{2}H_0}D_{\frac{t}{2}}\|_{HS}\leq
\|e^{-\frac{t}{2}(H_0+V_-)}D_{\frac{t}{2}}\|_{HS}.\end{equation}
 From (\ref{q1})
we have,
\begin{eqnarray}\label{q2}&&\|e^{-\frac{t}{2}(H_0+V_-)}D_{\frac{t}{2}}\|_{HS}^2\nonumber\\&=&
\int_{\Real^d}\int_{\Real^d}\int_{\Real^d}\int_{\Real^d}
 e^{-\frac{t}{2}(H_0+V_-)}(x,u)D_{\frac{t}{2}}(u,y)e^{-\frac{t}{2}(H_0+V_-)}(x,u')D_{\frac{t}{2}}(u',y)du du' dx
 dy\nonumber\\
&=&\int_{\Real^d}\int_{\Real^d}\int_{\Real^d}
 e^{-t(H_0+V_-)}(u,u')D_{\frac{t}{2}}(u,y)D_{\frac{t}{2}}(u',y)du du'
 dy\nonumber\\
 &\leq&\Big[\sup_{x,y\in\Real^d}e^{-t(H_0+V_-)}(x,y)\Big]\int_{\Real^d}\Big(\int_{\Real^d}
 D_{\frac{t}{2}}(u,y)du\Big)^2 dy\nonumber\\
 &\leq& \|e^{-t(H_0+V_-)} \|_{L^1,L^\infty}\int_{\Real^d}\Big(\int_{\Real^d}
 D_{\frac{t}{2}}(u,y)du\Big)^2 dy< \infty,
\end{eqnarray}
where the finiteness of the two terms of the product on the
right-hand side follows from Lemmas \ref{sch} and \ref{edt210}.

The result follows from (\ref{ff1}), (\ref{ff2}) and (\ref{q2}).
\end{proof}

We now show that the condition (\ref{u2}) (which in turn, by Lemma
\ref{edt210}, implies the conditon (\ref{ddd}) which we need) is
implied by the explicit conditions on $V_-$ given in Theorem
\ref{main}.
\begin{lemma}\label{sc}
Assume that $V_-$ satisfies (\ref{cond0}) for some $c>0$.

\noindent (i) If $d=4$ and $V_-$ also satisfies (\ref{cond2}) then
(\ref{u2}) holds.

\noindent (ii) If $d\geq 5$ and $V_-$ also satisfies (\ref{cond1})
then (\ref{u2}) holds.
\end{lemma}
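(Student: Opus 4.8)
The plan is to reduce the spacetime integral in (\ref{u2}) to a purely spatial integral against an explicit convolution kernel, and then to split that kernel according to whether $|w-w'|$ is large or small. Concretely, writing out the heat kernel $e^{-(s+s')H_0}(w,w')=(4\pi(s+s'))^{-d/2}e^{-|w-w'|^2/(4(s+s'))}$ and performing the $s,s'$ integrations over $[0,t]^2$ first, I expect to obtain
$$
\int_0^t\!\int_0^t e^{-(s+s')H_0}(w,w')\,ds\,ds' = \Phi_t(|w-w'|),
$$
where $\Phi_t(r)$ is a radial function whose behaviour I would pin down by a change of variables $\sigma=s+s'$ (the measure of $\{(s,s')\in[0,t]^2:s+s'=\sigma\}$ is piecewise linear in $\sigma$, bounded by $\min(\sigma,t)$), giving an integral of the form $\int_0^{2t}\sigma^{-d/2}e^{-r^2/(4\sigma)}\cdot(\text{something }\le t)\,d\sigma$. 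So (\ref{u2}) becomes exactly $\iint |V_-(w)||V_-(w')|\,\Phi_t(|w-w'|)\,dw\,dw' <\infty$, and the task is to dominate $\Phi_t(r)$ by a sum of the kernels appearing in (\ref{cond0}), (\ref{cond2}), (\ref{cond1}).

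Next I would analyze $\Phi_t(r)$ in the two regimes. For $r\ge 1$ (``large distances''): here the Gaussian decay dominates, and since $\sigma\le 2t$ one has $e^{-r^2/(4\sigma)}\le e^{-r^2/(8t)}$, so $\Phi_t(r)\le C_t\, e^{-r^2/(8t)}\le C_t' e^{-c r^2}$ for any fixed $c<1/(8t)$; choosing $t$ small enough (which is harmless, since (\ref{ddd}) and hence Hilbert–Schmidtness is only needed for small $t$) makes the exponent match the constant $c$ in (\ref{cond0}). Thus the contribution of $|w-w'|\ge 1$ is controlled by (\ref{cond0}). For $r<1$ (``small distances''): here the Gaussian is harmless and the singularity comes from the $\sigma\to 0$ end of $\int_0^{2t}\sigma^{-d/2}e^{-r^2/(4\sigma)}\,d\sigma$; substituting $u=r^2/(4\sigma)$ turns this into $r^{4-d}\int_{r^2/(8t)}^{\infty} u^{d/2-2}e^{-u}\,du$ up to constants. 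When $d\ge 5$ the integral $\int_0^\infty u^{d/2-2}e^{-u}\,du=\Gamma(d/2-1)$ converges, so $\Phi_t(r)\le C\,r^{4-d}$, matching (\ref{cond1}). When $d=4$ the exponent $d/2-2=0$, the integral is $\int_{r^2/(8t)}^\infty u^{-0}\cdots$—more precisely $\int_{r^2/(8t)}^\infty e^{-u}\,du/u$ after correctly tracking the power—which diverges logarithmically as $r\to 0$ like $\log(1/r)$, matching (\ref{cond2}); here one must be a little careful with the extra factor of $\sigma$ from the ``$\le\min(\sigma,t)$'' bound, which in $d=4$ tames $\sigma^{-2}$ down to $\sigma^{-1}$ and produces exactly the logarithm.

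Putting the two regimes together, $\Phi_t(|w-w'|)\le C\big(e^{-c|w-w'|^2} + \chi_{\{|w-w'|<1\}}\,\kappa_d(|w-w'|)\big)$ with $\kappa_4(r)=\log(1/r)$ and $\kappa_d(r)=r^{4-d}$ for $d\ge5$, so (\ref{u2}) follows from (\ref{cond0}) together with (\ref{cond2}) or (\ref{cond1}) respectively. The main obstacle I anticipate is purely computational bookkeeping rather than conceptual: getting the $\sigma$-integration right near $\sigma=0$ in the borderline case $d=4$, where one has to use the linear-in-$\sigma$ bound on the measure of the diagonal slice to convert a would-be $\sigma^{-2}$ divergence into the correct $\log(1/r)$ behaviour, and making sure the threshold $r^2/(8t)$ in the incomplete Gamma integral does not spoil the estimate for $r$ bounded away from $0$ (it doesn't, since there $\Phi_t$ is bounded anyway). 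No genuinely hard analysis is involved beyond these explicit one-dimensional integral estimates.
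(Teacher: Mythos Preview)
Your proposal is correct and follows essentially the same route as the paper: pass to the variable $\sigma=s+s'$ (the paper calls it $u$), use that the measure of the slice is linear in $\sigma$ near $0$, substitute $v=r^2/(4\sigma)$ to obtain an incomplete Gamma integral, and read off $r^{4-d}$ for $d\ge5$ and $\log(1/r)$ for $d=4$, with the Gaussian tail handling $|w-w'|\ge1$ via (\ref{cond0}). The only differences are organizational---the paper splits the $\sigma$-integral into $I_1$ ($\sigma<t$, where the factor $\sigma$ is essential) and $I_2$ ($\sigma>t$, bounded directly by the heat kernel at time $\ge t$), whereas you split only by $|w-w'|$---and a slip in your exponent: with the factor $\sigma$ included the substitution yields $u^{d/2-3}$, not $u^{d/2-2}$ (so the Gamma value is $\Gamma(d/2-2)$), which you effectively correct when you write ``more precisely $\int e^{-u}\,du/u$'' in the $d=4$ case.
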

\begin{proof}
We assume that $V_-$ satisfies (\ref{cond0}) for some $c=c_0$, and
note that this implies that it satisfies (\ref{cond0}) for
{\it{all}} $c\geq c_0$.

 We have
\begin{eqnarray}\label{cal}&&\int_0^t\int_{\Real^d}\int_0^t
\int_{\Real^d}|V_-(w)|e^{-(s+s')H_0}(w,w') |V_-(w')| dw' ds' dw ds\\
&=&\int_0^{t}\int_0^u \int_{\Real^d} \int_{\Real^d}|V_-(w)|e^{-u
H_0}(w,w') |V_-(w')| dw dw' ds du\nonumber\\
&+&\int_t^{2t}\int_{u-t}^{t} \int_{\Real^d}
\int_{\Real^d}|V_-(w)|e^{-u H_0}(w,w') |V_-(w')| dw dw' ds
du=I_1+I_2,\nonumber\end{eqnarray} where
$$I_1=\int_0^{t} u \int_{\Real^d}
\int_{\Real^d}|V_-(w)|e^{-u H_0}(w,w') |V_-(w')| dw dw' du,$$
$$I_2=\int_0^{t}u \int_{\Real^d} \int_{\Real^d}|V_-(w)|e^{-(2t-u)
H_0}(w,w') |V_-(w')| dw dw' du.$$

For $I_2$ we have
$$I_2=
\int_{\Real^d} \int_{\Real^d}|V_-(w)||V_-(w')| \Big(\int_0^{t} u
(4\pi (2t-u))^{-\frac{d}{2}} e^{-\frac{|w-w'|^2}{4(2t-u)}}du\Big) dw
dw'$$
$$\leq t^2 (4\pi t)^{-\frac{d}{2}}
\int_{\Real^d} \int_{\Real^d} e^{-\frac{|w-w'|^2}{8t}}
|V_-(w)||V_-(w')| dw dw',
$$
which is finite for $t>0$ sufficiently small due to the assumption
(\ref{cond0}).

We are left with showing that $I_1$ is finite under the stated
conditions. We have
\begin{eqnarray}\label{fi}I_1=(4\pi)^{-\frac{d}{2}} \int_{\Real^d}
\int_{\Real^d}|V_-(w)||V_-(w')| \Big(\int_0^{t} u^{1-\frac{d}{2}}
e^{-\frac{|w-w'|^2}{4u}}du\Big)  dw dw'.\nonumber
\end{eqnarray}
and making the substitution $v=\frac{a}{u}$, we estimate
\begin{eqnarray}\label{gg}\int_0^t
u^{1-\frac{d}{2}}e^{-\frac{a}{u}}
du=a^{2-\frac{d}{2}}\int_{\frac{a}{t}}^\infty
v^{\frac{d}{2}-3}e^{-v} dv \leq
a^{2-\frac{d}{2}}e^{-\frac{a}{2t}}\int_{\frac{a}{t}}^\infty
v^{\frac{d}{2}-3}e^{-\frac{v}{2}} dv.
\end{eqnarray}
If $d\geq 5$ then
$$\int_{\frac{a}{t}}^\infty
v^{\frac{d}{2}-3}e^{-\frac{v}{2}} dv\leq \int_{0}^\infty
v^{\frac{d}{2}-3}e^{-\frac{v}{2}}<\infty,$$ so that, putting
$a=\frac{|w-w'|^2}{4}$ in (\ref{gg}), we have
$$\int_0^{t} u^{1-\frac{d}{2}}
e^{-\frac{|w-w'|^2}{4u}}du\leq C
\frac{e^{-\frac{|w-w'|^2}{8t}}}{|w-w'|^{d-4}},$$ where $C$ is
independent of $w,w'$, so that, by (\ref{fi}),
$$I_1\leq C\int_{\Real^d}
\int_{\Real^d}\frac{e^{-\frac{|w-w'|^2}{8t}}}{|w-w'|^{d-4}}|V_-(w)||V_-(w')|
dw dw'$$
$$\leq C
\iint_{|w-w'|\geq 1}e^{-\frac{|w-w'|^2}{8t}}|V_-(w)||V_-(w')| dw
dw'$$
$$+ C
\iint_{|w-w'|\leq 1}\frac{1}{|w-w'|^{d-4}}|V_-(w)||V_-(w')| dw dw'$$
and both of the last two integrals are finite, the first (for
sufficiently small $t>0$) by (\ref{cond0}) and the second by
(\ref{cond1}),
 so that, in the case $d\geq 5$, (\ref{cond1}) we have that
$I_1$ is finite.

To treat the case $d=4$ we note that, using L'H\^opital's rule, we
have
$$\lim_{\alpha \rightarrow
0+}\frac{1}{\log(\frac{1}{\alpha})} \int_{\alpha}^\infty
v^{-1}e^{-\frac{v}{2}}dv=1.$$ We can therefore choose $0<\alpha_0<1$
so that
$$0<\alpha\leq \alpha_0\;\;\Rightarrow \;\;\int_{\alpha}^\infty
v^{-1}e^{-\frac{v}{2}}dv\leq 2\log(\alpha^{-1})$$ and then we also
have
$$\alpha>\alpha_0\;\;\Rightarrow \;\;\int_{\alpha}^\infty
v^{-1}e^{-\frac{v}{2}}dv\leq \int_{\alpha_0}^\infty
v^{-1}e^{-\frac{v}{2}}dv\leq 2\log(\alpha_0^{-1}).$$ Therefore,
using (\ref{gg}),
$$0<a\leq \alpha_0 t\;\;\Rightarrow \;\;\int_0^t u^{-1}e^{-\frac{a}{u}}
du= e^{-\frac{a}{2t}}\int_{\frac{a}{t}}^\infty
v^{-1}e^{-\frac{v}{2}} dv \leq
2e^{-\frac{a}{2t}}\log\Big(\frac{t}{a}\Big),$$
$$a> \alpha_0 t\;\;\Rightarrow \;\;\int_0^t u^{-1}e^{-\frac{a}{u}}
du= e^{-\frac{a}{2t}}\int_{\frac{a}{t}}^\infty
v^{-1}e^{-\frac{v}{2}} dv \leq
2e^{-\frac{a}{2t}}\log(\alpha_0^{-1})$$ and setting
$a=\frac{|w-w'|^2}{4}$
$$0<|w-w'|\leq 2\sqrt{\alpha_0 t}\;\;\Rightarrow \;\;\int_0^t u^{-1}e^{-\frac{|w-w'|^2}{4u}} du\leq
2e^{-\frac{|w-w'|^2}{8t}}\log\Big(\frac{4t}{|w-w'|^2}\Big),$$
$$|w-w'|>2\sqrt{\alpha_0 t}\;\;\Rightarrow \;\;\int_0^t u^{-1}e^{-\frac{|w-w'|^2}{4u}} du\leq
2e^{-\frac{|w-w'|^2}{8t}}\log(\alpha_0^{-1}).$$ Hence, using
(\ref{fi}),
\begin{eqnarray*}
I_1&\leq&2\log(\alpha_0^{-1})(4\pi)^{-\frac{d}{2}}
\iint_{|w-w'|>2\sqrt{\alpha_0 t}} e^{-\frac{|w-w'|^2}{8t}}|V_-(w)||V_-(w')|  dw dw'\\
&+&4(4\pi)^{-\frac{d}{2}} \iint_{|w-w'|<2\sqrt{\alpha_0 t}}
\log\Big(\frac{2\sqrt{t}}{|w-w'|}\Big)|V_-(w)||V_-(w')| dw dw'.
\end{eqnarray*}
The first integral above is finite for $t$ sufficiently small, due
to (\ref{cond0}). For the second integral we have
$$ \iint_{|w-w'|<2\sqrt{\alpha_0
t}} \log\Big(\frac{2\sqrt{t}}{|w-w'|}\Big)|V_-(w)||V_-(w')| dw dw'$$
$$=\iint_{|w-w'|<2\sqrt{\alpha_0
t}} \log\Big(\frac{1}{|w-w'|}\Big)|V_-(w)||V_-(w')| dw dw'$$
$$+\log(2\sqrt{t}) \iint_{|w-w'|<2\sqrt{\alpha_0
t}}  |V_-(w)||V_-(w')| dw dw',$$ and finiteness of the above two
integrals for $t>0$ sufficiently small follows from (\ref{cond2})
and (\ref{cond0}), respectively. We have thus shown that $I_1$ is
finite when $d=4$.
\end{proof}

\subsection{Hilbert-Schmidt norm bound for the composition of $G(z)$
and $D_t$}

\begin{lemma}\label{prod}
Assume $V_-\in K(\Real^d)$ and that (\ref{ddd}) holds. Then
$G(z)D_t$ is Hilbert-Schmidt, and
$$\|G(z)D_t\|_{HS}\leq  M(z) \Big[\int_{\Real^d}\Big(\int_{\Real^d}
|D_t(u,y)|du\Big)^2dy\Big]^{\frac{1}{2}},$$
\end{lemma}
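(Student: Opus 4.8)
The plan is to identify the integral kernel of the composed operator $G(z)D_t$ and then bound its Hilbert--Schmidt norm, which for an integral operator is simply the $L^2$ norm of its kernel over $\Real^d\times\Real^d$. Using the representation $G(z)f=g_z*f$ with $g_z\in L^2(\Real^d)\cap L^\infty(\Real^d)$ established above (recall that $M(z)=\|g_z\|_{L^2}$ by (\ref{dm})), together with $(D_tf)(u)=\int_{\Real^d}D_t(u,y)f(y)\,dy$, one expects $G(z)D_t$ to have kernel
\[
K_z(x,y)=\int_{\Real^d}g_z(x-u)\,D_t(u,y)\,du .
\]
First I would justify this: for $f\in L^2(\Real^d)$ write $(G(z)D_tf)(x)=\int_{\Real^d}g_z(x-u)(D_tf)(u)\,du=\int_{\Real^d}\int_{\Real^d}g_z(x-u)D_t(u,y)f(y)\,dy\,du$ and apply Fubini, whose hypotheses are supplied by the integrability facts collected below, so that $K_z$ is indeed the kernel of $G(z)D_t$.

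The key point is a pointwise-in-$y$ estimate. Since (\ref{ddd}) holds, the inner integral $\int_{\Real^d}|D_t(u,y)|\,du$ is finite for almost every $y$; for such $y$ the function $u\mapsto D_t(u,y)$ lies in $L^1(\Real^d)$, and $K_z(\cdot,y)=g_z*D_t(\cdot,y)$. By Young's convolution inequality ($L^2\ast L^1\subset L^2$),
\[
\|K_z(\cdot,y)\|_{L^2}\le \|g_z\|_{L^2}\,\|D_t(\cdot,y)\|_{L^1}= M(z)\int_{\Real^d}|D_t(u,y)|\,du .
\]
Squaring and integrating in $y$ then gives
\[
\|G(z)D_t\|_{HS}^2=\int_{\Real^d}\int_{\Real^d}|K_z(x,y)|^2\,dx\,dy\le M(z)^2\int_{\Real^d}\Big(\int_{\Real^d}|D_t(u,y)|\,du\Big)^2 dy ,
\]
which is finite by (\ref{ddd}). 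In particular $G(z)D_t$ is Hilbert--Schmidt, and taking square roots yields the asserted inequality.

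The only genuinely delicate point is the bookkeeping that makes the formal manipulations rigorous: verifying that $K_z$ is jointly measurable on $\Real^d\times\Real^d$, that the Fubini interchange identifying $K_z$ with the kernel of $G(z)D_t$ is legitimate, and that the Hilbert--Schmidt norm of the operator equals $\|K_z\|_{L^2(\Real^d\times\Real^d)}$. All of this follows from the finiteness conclusions above ($g_z\in L^2(\Real^d)\cap L^\infty(\Real^d)$, $D_t(\cdot,y)\in L^1(\Real^d)$ for a.e.\ $y$, and (\ref{ddd})); alternatively, one can bypass the explicit kernel identification by applying Minkowski's integral inequality directly to $\|G(z)D_tf\|_{L^2}$ for $f\in L^2(\Real^d)$, but the kernel computation is the most transparent route. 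Everything else is routine.
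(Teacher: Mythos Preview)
Your argument is correct and follows essentially the same route as the paper: identify the kernel of $G(z)D_t$ as $K_z(x,y)=(g_z*D_t(\cdot,y))(x)$ and bound $\|K_z(\cdot,y)\|_{L^2}$ by $\|g_z\|_{L^2}\,\|D_t(\cdot,y)\|_{L^1}$, then integrate in $y$. The only cosmetic difference is that the paper expands the square explicitly and applies Cauchy--Schwarz to $\int|g_z(x-u)g_z(x-v)|\,dx$, whereas you invoke Young's inequality $L^2*L^1\subset L^2$ directly; these are the same estimate.
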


\begin{proof}
We have
$$[G(z)D_t](x,y)=\int_{\Real^d}g_z(x-u)D_t(u,y)du,$$
hence
$$([G(z)D_t](x,y))^2=\int_{\Real^d}\int_{\Real^d}g_z(x-u)D_t(u,y)g_z(x-v)D_t(v,y)du dv$$
and thus
\begin{eqnarray}\label{g1}&&\|G(z)D_t\|_{HS}^2=\int_{\Real^d}\int_{\Real^d}([G(z)D_t](x,y))^2dx
dy\\
&=&\int_{\Real^d}\int_{\Real^d}\Big(\int_{\Real^d}g_z(x-u)g_z(x-v)dx\Big)
\Big(\int_{\Real^d}D_t(u,y)D_t(v,y)dy\Big)du dv\nonumber\\
&\leq& \Big[\sup_{u,v\in\Real^d}
\Big(\int_{\Real^d}|g_z(x-u)g_z(x-v)|dx\Big)\Big] \int_{\Real^d}
\int_{\Real^d} \int_{\Real^d}|D_t(u,y)D_t(v,y)|dydu dv.\nonumber\\
&=& \Big[\sup_{u,v\in\Real^d}
\Big(\int_{\Real^d}|g_z(x-u)g_z(x-v)|dx\Big)\Big]\int_{\Real^d}\Big(\int_{\Real^d}
|D_t(u,y)|du\Big)^2dy.\nonumber
\end{eqnarray}
We also have
\begin{eqnarray}\label{g3}&&\sup_{u,v\in\Real^d}\int_{\Real^d} |g_z(x-u)g_z(x-v)|dx\\&\leq&
\sup_{u,v\in\Real^d}\Big(\int_{\Real^d}|g_z(x-u)|^2
dx\Big)^{\frac{1}{2}} \Big(\int_{\Real^d}|g_z(x-v)|^2
dx\Big)^{\frac{1}{2}}=\|g_z\|_{L^2}^2 = (M(z))^2\nonumber.
\end{eqnarray}
The result follows from (\ref{g1}) and (\ref{g3}).
\end{proof}

\subsection{Bounding the Jensen integral}

\begin{lemma}\label{bob}Assume $V_-\in K(\Real^d)$ and (\ref{ddd}) holds. Then we have, for all
$|z|<1$,
\begin{eqnarray*}\|F(z)\|_{HS}&\leq& |z|[
C_1+ C_2 M(z)].
\end{eqnarray*}
where
$$C_1=\|D_t\|_{HS},\;\;\;C_2=\Big[\int_{\Real^d}\Big(\int_{\Real^d}
|D_t(u,y)|du\Big)^2dy\Big]^{\frac{1}{2}}.$$
\end{lemma}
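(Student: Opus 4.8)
The plan is to combine the decomposition of $F(z)$ already recorded in (\ref{fo}) with the two Hilbert–Schmidt estimates established in the preceding subsections. Recall from (\ref{fo}) that
\begin{equation*}
\|F(z)\|_{HS}\leq |z|\,\big[\,\|D_t\|_{HS}+\|G(z)D_t\|_{HS}\,\big],
\end{equation*}
so it suffices to bound each of the two terms inside the brackets. The first term is already in the desired form: it equals $C_1$ by definition. For the second term I would invoke Lemma \ref{prod}: under the standing hypotheses $V_-\in K(\Real^d)$ and (\ref{ddd}), that lemma gives exactly
\begin{equation*}
\|G(z)D_t\|_{HS}\leq M(z)\Big[\int_{\Real^d}\Big(\int_{\Real^d}|D_t(u,y)|\,du\Big)^2 dy\Big]^{1/2}=C_2\,M(z).
\end{equation*}
Substituting these two bounds into (\ref{fo}) yields $\|F(z)\|_{HS}\leq |z|\,[\,C_1+C_2 M(z)\,]$, which is the claimed inequality.

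There is essentially no obstacle here: the lemma is a bookkeeping step that packages the work done in Lemmas \ref{hs2}, \ref{edt210} and \ref{prod}. The only point requiring a word of care is that every hypothesis needed to apply Lemma \ref{prod} — namely $V_-\in K(\Real^d)$ and the kernel condition (\ref{ddd}) — is among the hypotheses of Lemma \ref{bob}, and that the constant $C_2$ appearing here is literally the square root occurring in the conclusion of Lemma \ref{prod}; its finiteness is guaranteed precisely by (\ref{ddd}). Likewise $C_1=\|D_t\|_{HS}<\infty$ because (\ref{ddd}) together with $V\in K(\Real^d)$ puts us in the situation of Lemma \ref{hs2}, so $D_t$ is indeed Hilbert–Schmidt (for $t$ small, which we may assume since $t$ is at our disposal throughout). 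With both constants finite and the two estimates in hand, the bound follows immediately and uniformly in $|z|<1$.

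The significance of the resulting inequality — and the reason it is isolated as a lemma — is that it reduces the estimation of the Jensen integral in Theorem \ref{identity}, via (\ref{itt}), to controlling the single scalar quantity $M(z)$, for which Lemma \ref{abh} already supplies sharp bounds: $M(z)$ is uniformly bounded when $d\geq 5$, and grows only like $(\log\frac{1}{1-r\cos\theta})^{1/2}$ when $d=4$. Thus after this lemma the remaining task is purely the integration $\frac{1}{2\pi}\int_0^{2\pi}\log|h(re^{i\theta})|\,d\theta\leq \frac{1}{4\pi}\int_0^{2\pi}\|F(re^{i\theta})\|_{HS}^2\,d\theta$ and taking $r\to 1-$, which in dimension $4$ reduces to the finiteness of $\int_0^{2\pi}\log\frac{1}{1-\cos\theta}\,d\theta$ — an elementary convergent integral — and in dimension $\geq 5$ is trivial by uniform boundedness.
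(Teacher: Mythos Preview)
Your proof is correct and follows exactly the paper's own argument: combine the decomposition (\ref{fo}) with Lemma \ref{prod} for the $G(z)D_t$ term and Lemma \ref{hs2} for the finiteness of $\|D_t\|_{HS}$. The additional commentary you give on the role of the lemma in the overall strategy is accurate and matches how the paper uses it in the proof of Theorem \ref{main}.
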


\begin{proof}
Returning to (\ref{fo}) and using Lemmas \ref{hs2} and \ref{prod} we
get the result.
\end{proof}

We are now ready for
\begin{proof1}
By the above lemma and by (\ref{itt}) we have
\begin{equation}\label{cb}\log(|h(z)|)\leq \frac{1}{2} \|F(z)\|_{HS}^2\leq  \frac{1}{2}|z|^2[
C_1+ C_2 M(z)]^2.\end{equation}

In the case $d=4$, (\ref{cb}) and Lemma \ref{abh}(a) give us
$$\log(|h(re^{i\theta})|)\leq \frac{1}{2} \Big[ C_1+ C_2
C\Big(\log\Big(\frac{1}{1-r\cos(\theta)}\Big)\Big)^{\frac{1}{2}}\Big]^2
\leq C'\log\Big(\frac{1}{1-r\cos(\theta)}\Big),$$ so that
$$\limsup_{r\rightarrow 1-}\int_0^{2\pi}\log(|h(re^{i\theta})|)d\theta\leq
C'
\int_0^{2\pi}\log\Big(\frac{1}{1-\cos(\theta)}\Big)d\theta<\infty,$$
which again, by Theorem \ref{identity}, gives us (\ref{m}).

In the case $d\geq 5$, Lemma \ref{abh}(c) tells us that
$$\log(|h(re^{i\theta})|)\leq C\;\;\;\forall r\in [0,1),\;\;\theta\in [0,2\pi]$$
so that
$$\limsup_{r\rightarrow
1-}\int_0^{2\pi}\log(|h(re^{i\theta})|)d\theta<\infty$$ and again
Theorem \ref{identity} gives us (\ref{m}).
\end{proof1}

We now prove the corollaries.
\begin{proof2}
We use Young's inequality:
$$\|f\ast g\|_{L^r}\leq C \|f\|_{L^p} \|g\|_{L^q}$$
valid for $p,q,r\geq 1$ with
$\frac{1}{r}+1=\frac{1}{p}+\frac{1}{q}$.

Taking $f(x)=|V_-(x)|$, $g(x)=e^{-c|x|^2}$, and
\begin{equation}\label{ay}p\in [1,2],\;\;
q=\frac{p}{2(p-1)},\;\;r=\frac{p}{p-1}\end{equation} (note that
since $p\leq 2$ we have $q\geq 1$), we have that if $V_-\in
L^p(\Real^d)$ then $f\ast g\in L^{\frac{p}{p-1}}(\Real^d)$. Thus,
using H\"older's inequality we have
$$\iint_{\Real^d}e^{-c|w-w'|^2}|V_-(w)||V_-(w')| dw'
dw$$$$ = \int_{\Real^d} |V_-(w)|(f\ast g)(w)dw \leq
\|V_-\|_{L^p}\|f\ast g\|_{L^\frac{p}{p-1}}<\infty.
$$
Thus (\ref{cond0}) holds whenever $V_-\in L^p(\Real^d)$, $p\in
[1,2]$.

To verify (\ref{cond2}) (for the case $d=4$) we take $f=|V_-|$,
$g(x)=\log(\frac{1}{|x|})\chi_{B_1}$, where $\chi_{B_1}$ is the
characteristic function of the unit ball. and $p,q,r$ according to
(\ref{ay}). We assume $V_-\in L^p$, and note that $g\in
L^q(\Real^4)$. Hence by Young's inequality we have $f\ast g\in
L^{\frac{p}{p-1}}(\Real^4).$ Therefore, using H\"older's inequality,
we have
$$
\iint_{|w-w'|<1}\log\Big(\frac{1}{|w-w'|}\Big)|V_-(w)||V_-(w')| dw'
dw$$
$$= \int_{\Real^4}|V_-(w)|(f\ast g)(w)dw \leq
\|V_-\|_{L^p}\|f\ast g\|_{L^\frac{p}{p-1}}<\infty,$$ so that
(\ref{cond2}) is satisfied for any $V_-\in L^p(\Real^4)$, $p\in
(1,2]$.

To verify (\ref{cond1}) (for the case $d\geq 5$) we take $f=|V_-|$,
$g(x)=\frac{1}{|x|^{d-4}}\chi_{B_1}$, and $p,q,r$ defined by
(\ref{ay}). Note that in order to have $g\in L^q(\Real^d)$ we need
the condition: $q(d-4)<d$, that is $p>\frac{2d}{d+4},$ and in that
case we get, assuming $V_-\in L^p$, $\|f\ast
g\|_{L^\frac{p}{p-1}}<\infty$, and thus
$$\iint_{|w-w'|<1}\frac{|V_-(w)||V_-(w')|}{|w-w'|^{d-4}}
dw dw' $$$$=  \int_{\Real^d}|V_-(w)|(f\ast g)(w)dw \leq
\|V_-\|_{L^p}\|f\ast g\|_{L^\frac{p}{p-1}}<\infty.$$ Thus we have
shown that (\ref{cond1}) holds when $p>\frac{2d}{d+4}.$ To show that
it also holds when $p=\frac{2d}{d+4}$, we need to use the
Hardy-Littlewood-Sobolev inequality (see e.g. \cite{liebloss},
Theorem 4.3), which says that
$$\Big|\int_{\Real^d}\int_{\Real^d} \frac{f(w)g(w')}{|w-w'|^{\lambda}} dw dw' \Big|\leq C \|f\|_{L^p} \|g\|_{L^r},$$
where $p,r>1$, $0<\lambda<d$, and
$\frac{1}{p}+\frac{1}{r}=2-\frac{\lambda}{d}$. We take
$p=r=\frac{2d}{d+4}$ (since $d\geq 5$ we have $p,r>1$),
$\lambda=d-4$, $f=g=|V_-|$, to obtain
$$\int_{\Real^d}\int_{\Real^d}\frac{|V_-(w)||V_-(w')|}{|w-w'|^{d-4}} dw dw'
<\infty,$$ which is even stronger than (\ref{cond1}).
\end{proof2}

\begin{proof3}
Since Corollary \ref{cor1} contains the result for the case $d=4$,
we can assume that $d\geq 5$. Since $V_-\in L^1(\Real^d)$, we can
write
$$\iint_{|w-w'|<1}\frac{|V_-(w)||V_-(w')|}{|w-w'|^{d-4}}
dw dw' \leq \|V\|_{L^1}\sup_{w\in \Real^d}
\int_{|w-w'|<1}\frac{|V_-(w')|}{|w-w'|^{d-4}} dw'. $$ Thus we only
need to show that the supremum on the right-hand side is finite.
Since $V_-\in K(\Real^d)$, we have from (\ref{kato}) that there
exists $\alpha<1$ such that
$$\sup_{w\in\Real^d}
\int_{|w-w'|\leq \alpha}\frac{|V(w')|}{|w-w'|^{d-2}}dw'=\eta<
\infty.$$ Thus
$$\sup_{w\in \Real^d}
\int_{|w-w'|<1}\frac{|V_-(w')|}{|w-w'|^{d-4}} dw'$$
$$\leq \sup_{w\in\Real^d}
\int_{|w-w'|\leq \alpha}\frac{|V_-(w')|}{|w-w'|^{d-4}}dw'
+\sup_{w\in\Real^d} \int_{\alpha <|w-w'|\leq
1}\frac{|V_-(w')|}{|w-w'|^{d-4}}dw'$$
$$\leq \sup_{w\in\Real^d}
\int_{|w-w'|\leq \alpha}\frac{|V_-(w')|}{|w-w'|^{d-2}}dw'
+\frac{1}{\alpha^{d-4}}\sup_{w\in\Real^d} \int_{\alpha <|w-w'|\leq
1}|V_-(w')|dw'$$
$$\leq \eta + \frac{1}{\alpha^{d-4}} \|V_-\|_{L^1},$$
and we have proved the required finiteness.
\end{proof3}

{\bf{Acknowledgment:}} We thank M. Hansmann for many
 helpful discussions.

\end{document}